\definecolor{gr}{rgb}   {0.,   0.69,   0.23 }
\definecolor{bl}{rgb}   {0.,   0.5,   1. }
\definecolor{mg}{rgb}   {0.85,  0.,    0.85}
\definecolor{yl}{rgb}   {0.8,  0.7,   0.}
\definecolor{or}{rgb}  {0.7,0.2,0.2}
\newtheorem{theorem}{Theorem} [section]
\newtheorem{proposition}[theorem]{Proposition}
\newtheorem{remark}[theorem]{Remark}
\newtheorem{definition}[theorem]{Definition}
\newcommand{\II}{\text{I \hspace{-2.8mm} I} }
\newcommand{\noi}{\noindent}
\newcommand{\R}{\mathbb{R}}
\newcommand{\C}{\mathbb{C}}
\newcommand{\bul}{\bullet}
\newcommand{\E}{\mathbb{E}}
\newcommand{\DD}{\mathbb{D}}
\newcommand{\K}{\mathbf{K}}
\newcommand{\F}{\mathcal{F}}
\newcommand{\al}{\alpha}
\newcommand{\be}{\beta}
\newcommand{\dl}{\delta}
\newcommand{\eps}{\varepsilon}
\newcommand{\g}{\gamma}
\newcommand{\s}{\sigma}
\newcommand{\dt}{\partial_t}
\newcommand{\LRA}{\Longrightarrow}
 \newcommand{\w}{\textup{w}}
\renewcommand{\o}{\omega}
\renewcommand{\O}{\Omega}
\newcommand{\les}{\lesssim}
\newcommand{\jb}[1]
{\langle #1 \rangle}
\newcommand{\ind}{\mathbf 1}
\newcommand{\PP}{\mathbb{P}}
\newcommand{\N}{\mathbb{N}}
\renewcommand{\H}{\mathcal{H}}
\newcommand{\NN}{\mathcal{N}}
\newcommand{\cA}{\mathcal{A}}
\newcommand{\Var}{\textup{Var}}
\newcommand{\wt}{\widetilde}
\newcommand{\wh}{\widehat}
\numberwithin{equation}{section}
\numberwithin{theorem}{section}
\begin{document}
\baselineskip = 14pt

\title[ASCLT for  PAM/HAM with colored noises]
{Almost sure central limit theorems for      parabolic/hyperbolic   Anderson models  \\ with  Gaussian colored noises}

\author[P. Xia  and G. Zheng]
{Panqiu Xia and Guangqu Zheng}

\thanks{Panqiu Xia is partially supported by NSF grant DMS-2246850.}

\address{
Panqiu Xia\\
School of Mathematics\\
Cardiff University\\
Abacws,  Senghennydd Road\\
Cathays, Cardiff\\
CF24 4AG, United Kingdom}

\email{xiap@cardiff.ac.uk}

\address{
Guangqu Zheng,
Department of Mathematics and Statistics\\
Boston University\\
665 Commonwealth Avenue\\
Boston, MA 02215, USA
}

\email{gzheng90@bu.edu}

\subjclass[2020]{60F15, 60H30}

\keywords{Almost sure central limit theorem;
hyperbolic Anderson model;
parabolic Anderson model;
 Ibragimov-Lifshits' criterion;
 second-order Gaussian Poincar\'e inequality;
Malliavin calculus; space-time colored noises.
}

\begin{abstract}  
This short note is devoted to 
establishing the almost sure central limit theorem 
for the parabolic/hyperbolic Anderson models driven by 
colored-in-time Gaussian noises,
completing recent results on quantitative central limit theorems 
for stochastic partial differential equations. 
We combine the second-order Gaussian Poincar\'e inequality with
Ibragimov and Lifshits' method of characteristic functions, 
effectively overcoming the challenge 
 from the lack of It\^o tools in this colored-in-time 
setting, and achieving results that are inaccessible with previous methods.

\end{abstract}

\date{\today}
\maketitle
%

%
% \tableofcontents

\baselineskip = 14pt

\section{Introduction}
%\label{SEC1}

The classical central limit theorem (CLT) states that
for a random sample of size $n$  drawn from a population with mean zero
and variance one, the sample mean $M_n$ admits Gaussian fluctuation 
as the sample size $n$ tends to infinity:
\[
\sqrt{n} M_n \xrightarrow[n\to\infty]{\rm law} \NN(0,1).
\]

\noi
Following this setting, the   almost sure central limit theorem (ASCLT)  
in its simplest form asserts that 
one can observe a Gaussian  behavior (asymptotically) 
along a generic trajectory via a logarithmic 
average: 
% \footnote{The \emph{logarithmic average} can be 
% replaced by any slowly varying analogue. For example, $1/k$ and $\log n$ in \eqref{ASCLT1}, 
% can be  substituted with  any $d_k$ and $D_n > 0$, respectively, 
% provided that $D_n = d_1 + \dots + d_n\uparrow \infty$ 
% and $D_{n+1}/D_n \to 1$ as $n$ tends to infinity. 
% See also the discussion in \cite[page 204]{LP90}. 
% It is important to note that our main result  (Theorem \ref{thm_main})  remains valid  
% when the logarithmic average is replaced by a slowly varying analogue in \eqref{def2}, 
% although we retain the former for simplicity in presentation. }: 
for almost every $\o\in\O$

\noi
\begin{align}\label{ASCLT1}
\frac{1}{\log n} \sum_{k=1}^n \frac{1}{k} \dl_{\sqrt{n} M_n(\o)  } \LRA \zeta
\end{align}

\noi
as $n\to\infty$, where $\dl_x$ denotes the Dirac mass at $x\in\R$, 
``$\LRA$'' indicates the weak convergence of finite measures, 
and $\zeta\sim\NN(0,1)$ stands for the standard Gaussian measure on $\R$
{\it throughout this note}. 
The first  ASCLT was   introduced  by P. L\'evy in his book
\cite[page 270]{Levy54} but remained largely unnoticed for several decades,
until it was rediscovered by various researchers in probability 
and dynamical systems
\cite{BD87, Fisher87, Bro88, Schatte88, LP90, Vol90, AW96}. 
For a comprehensive historical account up to 2001, see also the work
of Berkes and Cs\'aki \cite{BC01}.
 In recent years, several works
 \cite{BNT, CZthesis, Zheng17, AN22} have established almost  sure 
 (non-)central limit theorems using variants of Malliavin calculus
 in Gaussian, Poisson, and Rademacher settings. 

 On a different note, around 2018, Huang, Nualart, and Viitasaari initiated in \cite{HNV} a study on (quantitative) 
 central limit theorems for stochastic partial differential equations (SPDEs) driven by Gaussian noises.
 More precisely, they established a quantitative  CLT
 for the spatial averages of the solution to  
 a stochastic nonlinear heat equation 
 with multiplicative Gaussian space-time white noise. 
 Additionally, the first attempt of the similar topic for  
 stochastic nonlinear wave equations was published in 
 \cite{DNZ}.
 Since this pioneering work \cite{HNV}, 
 there has been a rapidly growing literature
 on the spatial averages of SPDEs. For CLT results, see, e.g., 
\cite{HNVZ,NZ20a,  BNZ, BNQSZ, NXZ22, CKNP22, CKNPjfa}, and for spatial ergodicity that precedes the CLTs, refer to \cite{CKNP21, NZ20b}.
 We refer the interested  readers 
 to \cite[(incomplete) table on page 5]{BHWXY23} 
 for an overview of relevant results in the Gaussian setting 
 and \cite{BZ23} in the L\'evy setting.

 In a series of papers  \cite{LZ1, LZ2, LZ3},   
 Li and Zhang developed  the ASCLTs for several SPDEs 
 with Gaussian noises that are white in time. 
 A key tool they employed is Malliavin calculus, 
 particularly the Clark-Ocone formula, which replies heavily on 
 the martingale structure resulting from the white-in-time nature of the Gaussian noises; 
  see  Remark \ref{rem_comp}-(ii) for more details.
  For a similar treatment applied 
  on the hyperbolic Anderson model driven by space-time pure-jump L\'evy white noise, refer to
  \cite[Section 3.1]{BXZ23}.  
However, this strategy using Clark-Ocone formula fails when attempting to establish  
the ASCLT results for cases with colored-in-time Gaussian noises as in, e.g., 
  \cite{BNQSZ, NXZ22}. Motivated by our joint work with Balan \cite{BXZ23}, 
  we will use a combination of  Ibragimov-Lifshits' method of characteristic functions
   and the second-order Gaussian Poincar\'e inequality to establish the ASCLT results
   for hyperbolic and parabolic Anderson models (HAM/PAM) driven by  
   space-time Gaussian colored  noises. 
   Such a combination was originally introduced in \cite[Section 3.2]{BXZ23} 
   within the L\'evy setting.

 \subsection{Framework}

Consider the following two equations:

\noi
\begin{align}\label{pam} \tag{PAM}
  \begin{cases}
  \dt u = \tfrac{1}{2} \Delta u + u \diamond \dot{W},\\
  u (0, \cdot) \equiv 1;
  \end{cases}
\end{align}

\noi
and

\noi
\begin{align}\label{HAM} \tag{HAM}
\begin{cases}
\dt^2 u = \Delta u + u \diamond \dot{W} \\
u(0,\cdot) \equiv 1 
\quad
{\rm and}
\quad
\dt u(0, \cdot) \equiv 0;
\end{cases}
\end{align}

\noi
where $\diamond$ denotes the Wick product, 
meaning that  the corresponding stochastic integral is interpreted 
in the Skorohod sense,
 and $\dot{W}$ is a centered space-time Gaussian noise with correlation
 
 \noi
\begin{align*} 
  \E \big[\dot{W} (t,x) \dot{W}(s,y)\big] = \gamma_0 (t - s) \gamma_1 (x-y)
\end{align*}

\noi
satisfying certain conditions to ensure the existence and uniqueness of 
(random field) solutions. In this note, we consider \eqref{pam} in any spatial dimension,
while
we restrict our analysis of \eqref{HAM} to dimensions $d = 1, 2$; see Remark \ref{rem_d} for
relevant discussions. 
Let us first state the following standing hypotheses of this note.

\noi
%\begin{hypothesis} \label{hyp-pam}
\begin{enumerate}[\bf (H1)]
\item $\gamma_0 : \R \to [0,\infty]$ is nonnegative-definite and locally integrable.

\item $\gamma_1: \R^d\to[0,\infty]$ is nonnegative-definite
 such that  $\g_1 = \F \mu$ is  the Fourier transform  
 of some nonnegative tempered measure $\mu$, called the spectral measure, 
 satisfying Dalang's condition (\cite{Dalang}):
  \begin{align}\label{D99}
  \int_{\R^d} \frac{1}{ 1  + | \xi|^2 } \mu(d\xi) <\infty.
  \end{align}
 
\end{enumerate}

\begin{definition} \rm
A random field  $u = \{u(t,x) : (t,x) \in \R_+\times\R^d\}$ is called a solution to    \eqref{pam} or   \eqref{HAM},  provided that for all $(t,x) \in \R_+ \times \R^d$,
it holds almost surely that 

\noi
\begin{align}\label{mild}
  u(t,x) = 1 + \int_0^t \int_{\R^d} G_{t - s} (x - y) u(s, y) W(d s, d y),
\end{align}
where $G = G^H$ (resp. $G^W$) denotes the heat kernel (resp. wave kernel)
 \begin{align}
G_{t}^H(x) \coloneqq (2 \pi t)^{- \frac{d}{2}} e^{- \frac{|x|^2}{2 t}} ,
 \,\,\,\text{ $\forall d\geq 1$},
\quad \text{and} \quad G_t^W(x) \coloneqq \begin{dcases}
\frac{1}{2} \ind_{\{ |x| < t \}}, & d = 1,\\
\frac{1}{2 \pi \sqrt{t^2 - |x|^2}}  \ind_{\{ |x| < t \}},   & d = 2;
\end{dcases}
\label{fSol}
\end{align}
and the stochastic integral is interpreted in the Skorohod sense.
That is, 
the mild form \eqref{mild} should be understood as 
$u(t,x) = 1 + \dl(  G_{t-\bul}(x-\ast) u(\bul, \ast)   )$ with 
$\dl$ the divergence operator in Malliavin calculus; 
see Section \ref{SEC2_M} and see also 
 \cite[Section 1.3.2]{Nua06} and \cite[Section 2.5]{blue}.
Note that we follow the convention that $G_t = 0$ for $t\leq 0$.

\end{definition}

Due to the linearity in the unknown $u$, 
one can formally iterate the  integral equation
\eqref{mild} to obtain an infinite series, 
which corresponds exactly to the {\it Wiener chaos expansion}
of the solution (whenever it exists):

 \noi
 \begin{align}\label{CP1}
  u(t,x) = 1 + \sum_{p = 1}^{\infty} I_p (f_{t,x,p}),
\end{align}

\noi
where $I_p (f_{t,x,p})$ stands for the $p$-th multiple Wiener-It\^{o} integral 
of the kernel $f_{t,x,p}$
 given by

\noi
\begin{align*}
  f_{t,x,p}(s_1,\dots, s_p, z_1,\dots, z_p) 
  \coloneqq \frac{1}{p!} \prod_{i = 0}^p G_{t_{\tau(i + 1)} 
        - t_{\tau (i)}} \big(z_{\tau(i + 1)} - z_{\tau (i)}\big),
\end{align*}

\noi
with $\tau$ denoting the permutation on $\{1,\dots, p\}$ 
such that $0 < t_{\tau (1)} < \dots < t_{\tau_{(p)}} < t$, 
and by convention $(t_{\tau (0)}, z_{\tau (0)})  \coloneqq (0,0)$ 
and $(t_{\tau (p + 1)}, z_{\tau (p + 1)}) \coloneqq (t,x)$;
see Section \ref{SEC_pre} for some preliminaries. 
The finiteness of the above series in $L^2(\O)$, 
or the validity of the chaos expansion \eqref{CP1},
can be verified by computing the second moment of each multiple integral, 
which  relies on the orthogonality relation 
(see, e.g., \cite[Proposition 2.7.4]{blue}).

\begin{remark}\rm
In fact, the above hypotheses {\bf (H1)} and {\bf (H2)} suffice to 
guarantee the unique existence of solutions to \eqref{pam}
and \eqref{HAM}; see  \cite[Theorem 3.2]{HHNT15} for the heat case
and see \cite[Section 5]{BS17} for the wave case. 
For the wave case, 
Balan and Song proved the unique existence of 
solutions to \eqref{HAM}
on any spatial dimension. More precisely, 
{\bf (H1)} and {\bf (H2)} implies that \eqref{CP1} is the unique solution 
to \eqref{HAM} when $d\leq 2$ (see \cite[Theorem 5.2]{BS17}), 
and under the additional 
assumption that the spectral measure $\mu$
 is absolutely continuous with 
respect to the Lebesgue measure, \eqref{CP1}
 is the unique solution 
to \eqref{HAM} when $d\geq 3$ 
(see \cite[Theorem 5.6]{BS17}. 
The delicacy in higher dimensions comes from 
the fact that the corresponding wave kernel $G^W$ on $\R^d$, 
for $d\geq 3$, 
is not  a function any more, so that the interpretation of 
product of  $G^W$ and the unknown $u$ 
(and thus the interpretation of 
the multiple integral $I_p(f_{t,x,p})$) requires additional care.
Such delicacy also makes it more difficult to establish
 the CLT results 
in higher dimensions (see \cite{Ebina1, Ebina2}).

\end{remark}

In the following, we present several results on (quantitative) CLTs
for spatial averages, which will serve as the basis for establishing the ASCLTs. 

\begin{theorem}\label{thm_old} 
Let  the above hypotheses {\bf (H1)} and {\bf (H2)} hold and 
we   make further assumptions on the temporal/spatial correlation kernels:
\begin{itemize}
\item[(a)] for any $\eps > 0$,
\begin{align}
\label{cond_trivial}
 \int_0^\eps \gamma_0 (r )dr > 0;
\end{align}

\item[(b)] the spatial correlation kernel $\gamma_1$ satisfies
 one of  the following properties:
\begin{enumerate}
  \item[\rm (b1)] $\displaystyle \gamma_1 \in L^1(\R^d)$ with 
  $\| \gamma_1 \|_{L^1(\R^d)} > 0$,

  \item[\rm(b2)] $\gamma_1 (z) = |z|^{-\alpha}$  
  with some $\alpha \in (0, 2 \wedge d)$.

  \end{enumerate}
\end{itemize}
Let $u$ denote the solution \eqref{CP1} to \eqref{pam} 
for any spatial dimension $d$ or \eqref{HAM} with $d\leq 2$.
Fix any $t_0\in(0,\infty)$ throughout this note. 
We define  for any $R > 0$ that

\noi
 \begin{align}\label{def_f}
 F_R &= F_R (t_0)
   \coloneqq \int_{|x| \leq R} [ u(t_0, x) - 1 ] dx
   \quad
   {\rm and}
   \quad
\wh{F}_R \coloneqq \tfrac{1}{\s_R} F_R
 \end{align}
with  $\s_R \coloneqq \sqrt{ \Var(F_R) } > 0$ for each $R>0$. 
 Then, the following quantitative CLTs hold for any $R > 0$:
 % \begin{align} 
 % d_{\rm TV}(\wh{F}_R, Z ) \leq C  \times
 % \begin{cases}
 %  R^{-d/2}    &  \text{\rm for \eqref{pam} with (b1)} \qquad  \text{\rm \textbf{(case 1)}}  \\
 % R^{-\al/2} &     \text{\rm for \eqref{pam} with (b2)} \qquad     \text{\rm \textbf{(case 2)}}  \\
 % R^{-d/2}   &  \text{\rm for \eqref{HAM} with (b1)} \qquad  \text{\rm \textbf{\textbf{(case 3)}}}  \\
 %   R^{-\al/2}      &  \text{\rm for \eqref{HAM} with (b2)} \qquad   \text{\rm \textbf{(case 4)}},
 % \end{cases}
 % \label{QCLT1}
 % \end{align}
 
\begin{align} 
 d_{\rm TV}(\wh{F}_R, Z ) \leq C  \times
 \begin{cases}
 \begin{aligned}
  R^{-d/2}    & \quad \text{\rm for \eqref{pam} with (b1)} &&  \text{\rm \textbf{(case 1)}}  \\
 R^{-\al/2} &  \quad   \text{\rm for \eqref{pam} with (b2)} &&     \text{\rm \textbf{(case 2)}}  \\
 R^{-d/2}   & \quad \text{\rm for \eqref{HAM} with (b1)} &&  \text{\rm \textbf{\textbf{(case 3)}}}  \\
   R^{-\al/2}      & \quad \text{\rm for \eqref{HAM} with (b2)} &&   \text{\rm \textbf{(case 4)}},
   \end{aligned}
 \end{cases}
 \label{QCLT1}
 \end{align}

 \noi
 where $Z \sim \mathcal{N}(0,1)$,
 $d_{\rm TV}(X, Y)$ stands for the total-variation distance between 
 two real-valued random variables $X$ and $Y$:

\noi
\begin{align}\label{TVD}
 d_{\rm TV}(X, Y ) 
 \coloneqq  \tfrac{1}{2}\sup \big| \E[ h(X)]  - \E[h(Y)] \big|,
\end{align}

\noi
where the supremum running over all real-valued bounded measurable functions
$h:\R\to\R$ such that $\| h \|_\infty \leq 1$. 
Here,  the implicit constant $C$ does not depend on $R$.
 See \cite{NXZ22, BNQSZ} for more details.

 \end{theorem}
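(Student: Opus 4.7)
The plan is to deploy the Malliavin-Stein method of normal approximation. Let $\H$ denote the Hilbert space associated with the noise $\dot W$ (its inner product induced by $\g_0\otimes\g_1$), let $D$ be the Malliavin derivative, and let $L^{-1}$ be the pseudo-inverse of the Ornstein-Uhlenbeck generator; the Nourdin-Peccati inequality yields
\begin{align*}
d_{\rm TV}(\wh F_R, Z) \leq \tfrac{2}{\s_R^2}\sqrt{\Var\bigl(\langle DF_R,\, -DL^{-1}F_R\rangle_\H\bigr)}.
\end{align*}
The mild formulation \eqref{mild} gives $u(t_0,x)-1 = \dl\bigl(G_{t_0-\bul}(x-\ast)u(\bul,\ast)\bigr)$, so by Fubini $F_R = \dl(\phi_R)$ with $\phi_R(s,y) = u(s,y)\int_{|x|\leq R}G_{t_0-s}(x-y)\,dx$. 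This Skorohod-integral representation is the algebraic starting point: it reduces $\langle DF_R,-DL^{-1}F_R\rangle_\H$ to inner products involving $\phi_R$ and Malliavin derivatives of $u$.

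Matching two-sided bounds on $\s_R^2$ come from the chaos expansion \eqref{CP1}. The first-chaos contribution to $F_R$ is essentially $\iint_{[0,t_0]\times\R^d}\bigl(\int_{|x|\leq R}G_{t_0-s}(x-y)\,dx\bigr)\,W(ds,dy)$, whose variance is comparable to $R^{2d}\int_{|z|\leq 2R}\g_1(z)\,dz$ up to a strictly positive $t_0$-dependent factor (strict positivity invokes condition \eqref{cond_trivial}); hypercontractivity then shows the higher chaoses contribute at most the same order, yielding $\s_R^2\asymp R^{2d}$ in case (b1) and $\s_R^2\asymp R^{2d-\al}$ in case (b2). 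For the numerator, one iterates the Malliavin identity
\begin{align*}
D_{s,y}u(t,x) = G_{t-s}(x-y)u(s,y)\ind_{\{s<t\}} + \int_s^t\!\!\int G_{t-r}(x-z)\,D_{s,y}u(r,z)\,W(dr,dz),
\end{align*}
combines it with Minkowski's inequality and uniform $L^p(\O)$ moment bounds for $u$, and arrives at a quadruple spatiotemporal integral against $\g_0\otimes\g_1\otimes\g_0\otimes\g_1$ of products of localized heat/wave kernels; a careful estimate shows that this integral is of order $R^{3d}$ in case (b1) and $R^{3d-\al}$ in case (b2). Dividing by $\s_R^2$ and taking a square root produce the announced rates.

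The principal obstacle is extracting the sharp power of $R$ from the quadruple integral above. Because the noise is colored in time, one cannot invoke an It\^o isometry to collapse a temporal variable, so the four time variables must be separated via repeated applications of chaos orthogonality and the product rule, after which the dispersive scaling of $G$ has to be aligned with the polynomial growth of the spatial covariance. The integrability of $\g_1$ in case (b1) and the exact Riesz scaling in case (b2) (with the constraint $\al<2\wedge d$ coming from Dalang's condition \eqref{D99}) are used decisively; for the wave equation in dimensions $d=1,2$ one must additionally accommodate the lower regularity of $G^W$ relative to $G^H$ when estimating the inner integrals.
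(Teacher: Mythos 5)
You should first note that the paper itself does not prove Theorem \ref{thm_old}: it is imported from \cite{NXZ22} (heat) and \cite{BNQSZ} (wave), and your overall architecture --- the Malliavin--Stein bound $d_{\rm TV}(\wh F_R,Z)\le \tfrac{2}{\s_R^2}\sqrt{\Var(\langle DF_R,-DL^{-1}F_R\rangle_\H)}$, variance asymptotics from the chaos expansion, and moment bounds on Malliavin derivatives of $u$ --- is indeed the architecture of those proofs and of the paper's own Section \ref{sec_proof}. Nevertheless, your sketch contains a quantitative error and omits the key analytic device.

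First, your variance asymptotics in case (b1) is off by a factor of $R^d$: the double integral $\int_{|x|\le R}\int_{|y|\le R}\g_1(x-y)\,dx\,dy$ is comparable to $R^d\int_{|z|\le 2R}\g_1(z)\,dz$, not $R^{2d}\int_{|z|\le 2R}\g_1(z)\,dz$ (the Jacobian is lost when one variable is frozen), so $\s_R^2\asymp R^{d}$ in case (b1), consistent with $\s_R\sim R^{d/2}$ in \eqref{sR_asymp}, rather than $R^{2d}$. Your claimed orders $R^{3d}$ and $R^{3d-\al}$ for $\Var(\langle DF_R,-DL^{-1}F_R\rangle_\H)$ are likewise incorrect; the correct orders are $R^{d}$ and $R^{4d-3\al}$, i.e.\ $\Var(\langle D\wh F_R,-DL^{-1}\wh F_R\rangle_\H)\les R^{-d}$ resp.\ $R^{-\al}$ as in \eqref{ob1}. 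With the corrected $\s_R^2\sim R^{2d-\al}$, your case-(b2) numerator would yield the rate $R^{(\al-d)/2}$ rather than the announced $R^{-\al/2}$, so the exponents in your sketch do not actually reproduce \eqref{QCLT1}.

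Second, and more seriously, your plan for controlling $\Var(\langle DF_R,-DL^{-1}F_R\rangle_\H)$ --- iterating the first-order Malliavin identity for $D_{s,y}u(t,x)$ and arriving at a quadruple integral against $\g_0\otimes\g_1\otimes\g_0\otimes\g_1$ --- does not close as stated. Moment bounds on $Du$ control the \emph{mean} of that inner product, not its fluctuation; and since the noise is colored in time there is no Clark--Ocone or It\^o-isometry route to the variance. The mechanism actually used in \cite{NXZ22,BNQSZ} (and recorded here as Proposition \ref{lmm_d-dl}, the second-order Gaussian Poincar\'e inequality of \cite{Ch08,Vid20}) requires the \emph{second} Malliavin derivative bound $\|D_{r,z}D_{s,y}u(t,x)\|_p\les G_{t-s}(x-y)\,G_{s-r}(y-z)$ from \eqref{DU} and leads to a sextuple integral against three copies of $\g_0\otimes\g_1$, not a quadruple one. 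Your proposal never mentions $D^2u$, and without it (or a fully executed chaos/contraction computation, which you gesture at but do not carry out) the central variance estimate remains unproved.
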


The range of $\al$ in (b2)
  follows from \eqref{D99}, 
  with the corresponding
  spectral measure $\mu(d\xi)  = c_\al | \xi|^{\al - d}$
  for some explicit constant $c_\al$.
  The conditions \eqref{cond_trivial} and ``$\| \g_1\|_{L^1(\R^d)} > 0$"
  ensure that we are dealing with nontrivial space-time Gaussian noises
  and that the spatial integral $F_R$ has strictly positive variance 
  for each $R$. 
  It is 
not difficult to prove  $\s_R>0$ for each $R>0$.
 In fact, for $t_0>0$, $\E[ u(t_0, x)^2 ] > 1 $,
 in view of the chaos expansion \eqref{CP1} and the orthogonality relation of multiple integrals. The same argument also leads to $ \E[ u(t_0, x) u(t_0, y) ] \geq 1$, so that
 with these two facts   and the $L^2(\Omega)$-continuity of the solution, we get 
 \[
 \s_R^2 = \int_{|x|, |y| < R} \big( \E[ u(t_0, x) u(t_0, y) ] - 1 \big) dxdy > 0.
 \] 
 For the asymptotic behavior of $\s_R$, see \eqref{sR_asymp}.

 \subsection{Main result}

Let us first state the definition of ASCLT.

\begin{definition}\label{ASCLT}
 A family  $\{ F_{\theta} \colon \theta \geq 1\}$ of real random variables is said to satisfy
the  \textup{ASCLT} if for $\PP$-almost every
 $\omega\in\Omega$, the map $\theta \mapsto F_{\theta}(\omega)$
  is almost surely measurable and

 \noi
 \begin{align} \label{def2}
\nu_T^\o
\coloneqq \frac{1}{\log T} \int_1^T \dl_{F_{\theta}(\omega)}   \frac{d \theta}{\theta}
 \LRA \zeta
 \quad\text{as $T\to+\infty$,}
  \end{align}
where $\zeta$ stands for the standard Gaussian measure 
on $\R$. %throughout this note. 

 \end{definition}

 In this note, we aim
 to establish an ASCLT for  $\wh{F}_R$  in \eqref{def_f}. 
 Thus, we choose  the continuum parameter $R>0$ in 
Definition \ref{ASCLT}, unlike the discrete parameter $n\in\N$
stated  in \eqref{ASCLT1}. 
 We present  an equivalent statement of ASCLT in 
Remark \ref{rem_comp}-(ii).

\begin{remark} \label{rem_comp}\rm

(i)
 The \emph{logarithmic average} in \eqref{ASCLT1} can be 
replaced by any slowly varying analogue. For example, $1/k$ and $\log n$ in \eqref{ASCLT1}, 
can be  substituted with  any $d_k$ and $D_n > 0$, respectively, 
provided that $D_n = d_1 + \dots + d_n\uparrow \infty$ 
and $D_{n+1}/D_n \to 1$ as $n$ tends to infinity. 
See also the discussion in \cite[page 204]{LP90}. 
Note that our main result  (Theorem \ref{thm_main})  remains valid  
when the logarithmic average is replaced by a slowly varying analogue
 in \eqref{def2}, 
although we retain the former for simplicity in presentation. 

\smallskip

\noi
(ii)
The validity of the ASCLT (Definition \ref{ASCLT})
for $\{ F_\theta: \theta \geq 1\}$
is equivalent to the statement that
for any bounded and Lipschitz continuous function $\phi:\R\to\R$,
it holds almost surely that
\[
\frac{1}{\log T} \int_1^T \frac{1}{\theta} \phi( F_\theta ) d\theta
\to \int_{\R} \phi(x) \zeta(dx),
\]
as $T\to+\infty$; see \cite[Remark 1.2]{BXZ23}.
Assuming CLT holds (i.e., $F_\theta\LRA \zeta$), it is equivalent to show
that, almost surely, 
\[ 
\frac{1}{\log T} \int_1^T \frac{1}{\theta} H_\theta   d\theta
\to \int_{\R} \phi(x) \zeta(dx),
\]
where $H_\theta \coloneqq  \phi( F_\theta ) - \E[  \phi( F_\theta ) ]$
is uniformly bounded. 
Taking advantage of the 
uniform boundedness of $H_\theta$,
it suffices to obtain a power decay
like   $|\E[ H_\theta H_\w]| 
\leq C (\theta/\w)^\beta$
for $\theta < \w$ with some $\be>0$ and $C\in(0,\infty)$. 
This can be easily accomplished 
using Clark-Ocone formula when the noise is white in time; see, e.g., 
\cite[Section 3.1]{BXZ23} and \cite{LZ1, LZ2, LZ3}.
However, in our note, where the Gaussian noises are colored in time, 
this strategy of   applying the Clark-Ocone formula fails. 
\end{remark}

% \medskip

Now we are ready to state our main result.

\begin{theorem}\label{thm_main}
Under the assumptions of Theorem \ref{thm_old}, the {\rm ASCLT}
holds for  $\big\{ \wh{F}_R: R\geq 1 \big\}$.

\end{theorem}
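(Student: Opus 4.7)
The plan is to apply the Ibragimov--Lifshits method of characteristic functions, building on the equivalent formulation of ASCLT recorded in Remark \ref{rem_comp}(ii). Since Theorem \ref{thm_old} already delivers the CLT $\wh{F}_R\LRA\zeta$, the goal reduces to verifying, for every bounded Lipschitz function $\phi:\R\to\R$, that
\[
\frac{1}{\log T}\int_1^T\frac{\phi(\wh{F}_R)-\E[\phi(\wh{F}_R)]}{R}\,dR\to 0\qquad\text{almost surely as }T\to\infty.
\]
Writing $H_R\coloneqq\phi(\wh{F}_R)-\E[\phi(\wh{F}_R)]$, the $L^2(\O)$-norm of the above display squares to
\[
\frac{2}{(\log T)^2}\int_1^T\!\!\int_1^S\frac{\E[H_RH_S]}{RS}\,dR\,dS,
\]
so the entire theorem reduces to a polynomial covariance decay of the form
\begin{align*}
\bigl|\E[H_RH_S]\bigr|\leq C_\phi\,(R/S)^\beta,\qquad 1\leq R\leq S,
\end{align*}
for some $\beta>0$. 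Such a bound forces the second moment above to be $O(1/\log T)$; combining a Borel--Cantelli argument along a rapidly growing subsequence (for instance $T_n=\exp(n^2)$) with a routine interpolation between consecutive $T_n$ then upgrades the convergence to every $T\to\infty$.

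The covariance estimate is produced via Malliavin calculus. The integration-by-parts formula yields
\begin{align*}
\E[H_RH_S]=\E\!\Bigl[\phi'(\wh{F}_R)\,\bigl\langle D\wh{F}_R,\,-DL^{-1}H_S\bigr\rangle_\H\Bigr],
\end{align*}
where $L$ denotes the Ornstein--Uhlenbeck generator and $\H$ is the Hilbert space associated with $W$ whose inner product encodes the kernels $\g_0$ and $\g_1$. Controlling this right-hand side reduces, via the contractivity of $DL^{-1}$ together with Cauchy--Schwarz, to estimating a cross $L^2$-quantity involving $\langle D\wh{F}_R, D\wh{F}_S\rangle_\H$ (and closely related inner products built from $D^2\wh{F}$, which arise naturally when one iterates the mild formulation \eqref{mild}). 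The second-order Gaussian Poincar\'e inequality then supplies the moment bounds on $D\wh{F}$ and $D^2\wh{F}$; the desired polynomial gain $(R/S)^\beta$ will emerge from combining (i) the spatial localization of $D_{s,y}u(t_0,x)$ --- supported in the light cone $\{|y-x|\leq t_0-s\}$ in the wave case and exhibiting heat-kernel Gaussian decay in the parabolic case --- with (ii) the variance asymptotics $\s_R^2\asymp R^d$ under (b1) or $\s_R^2\asymp R^{2d-\al}$ under (b2).

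The main obstacle, and the reason this work departs from \cite{LZ1,LZ2,LZ3}, is the absence of a Clark--Ocone martingale representation in the colored-in-time regime, which in the white-in-time case supplies the covariance decay virtually for free through orthogonality of martingale increments. To replace it, I would adopt the strategy of \cite{BXZ23} originally devised in the L\'evy-noise setting: unpack the covariance directly through the Wiener chaos expansion \eqref{CP1}, and carry out the second-order Gaussian Poincar\'e estimates by hand, the subtlety being that the inner product on $\H$ must now absorb the temporal correlation $\g_0$ alongside the spatial correlation $\g_1$. The most delicate step will be making these bounds uniform across the four cases of Theorem \ref{thm_old}, particularly the Riesz-kernel case (b2) with $\g_1(z)=|z|^{-\al}$, $\al\in(0,2\wedge d)$, where the spatial integrals are only marginally convergent under Dalang's condition \eqref{D99}.
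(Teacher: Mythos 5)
Your proposal is essentially correct, but it takes a genuinely different route from the paper. The paper does \emph{not} prove the covariance decay $|\E[H_RH_S]|\les (R/S)^{\beta}$ for $H_R=\phi(\wh F_R)-\E[\phi(\wh F_R)]$; instead it invokes the Ibragimov--Lifshits characteristic-function criterion (Proposition \ref{prop:IL}, packaged as Proposition \ref{prop1818}), which reduces the ASCLT to two total-variation bounds: $d_{\rm TV}(\wh F_\theta,Y)\les\theta^{-b}$ (already in Theorem \ref{thm_old}) and $d_{\rm TV}\big(\tfrac{\wh F_\theta-\wh F_\w}{\sqrt2},Y\big)\les\theta^{-\beta_1}+(\theta/\w)^{\beta_2}$. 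The latter is proved by Stein's method and requires \emph{both} the covariance bound \eqref{COV_b1} \emph{and} the cross-variance bound \eqref{ob2} on $\Var(\langle D\wh F_\theta,-DL^{-1}\wh F_\w\rangle_\H)$, the second of which is where the second-order Gaussian Poincar\'e inequality (Proposition \ref{lmm_d-dl}) and the $D^2$ bounds in \eqref{DU} actually enter. Your route, by contrast, needs only the covariance-type estimate: writing $\E[H_RH_S]=\E[\phi'(\wh F_R)\langle D\wh F_R,-DL^{-1}H_S\rangle_\H]$ via \eqref{Stein_b2}, using $\|D_{r',y'}L^{-1}H_S\|_2\le\|\phi'\|_\infty\|D_{r',y'}\wh F_S\|_2$ from \eqref{ineq1a} and the nonnegativity of $\g_0,\g_1$, one lands exactly on the integral in \eqref{ibp2}, so $|\E[H_RH_S]|\le\|\phi'\|_\infty^2\cdot O((R/S)^{\beta})$ follows from the same computation that proves \eqref{COV_b1}; the Borel--Cantelli/interpolation step along $T_n=\exp(n^2)$ is standard. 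This is a legitimate and arguably leaner argument --- it sidesteps the second-order Poincar\'e machinery entirely and correctly replaces the unavailable Clark--Ocone formula by the $DL^{-1}$ integration by parts. What the paper's route buys is robustness: the Ibragimov--Lifshits criterion is not tied to a Gaussian limit and the authors explicitly anticipate almost sure \emph{non-central} limit theorems, for which your test-function reduction in the form stated would not suffice.

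Two caveats. First, your description of where $D^2\wh F$ and the second-order Poincar\'e inequality enter is muddled: that inequality \emph{consumes} the moment bounds \eqref{DU} on $D$ and $D^2$ rather than supplying them, and in your route it is not needed at all --- only first-order derivatives appear in the covariance estimate. Second, the actual polynomial gain $(R/S)^{\beta}$ is asserted rather than proved; in the Riesz-kernel cases \textbf{(case 2)} and \textbf{(case 4)} of \eqref{QCLT1} this is not a soft consequence of ``localization plus variance asymptotics'' but requires the Fourier-analytic work of \eqref{Fa1}--\eqref{Fa3d} (the subordination identity for $|\xi|^{\alpha-d}$, the splitting of the $\theta$-integral, and the uniform bound \eqref{Fa3d}). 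You correctly flag this as the delicate step, but as written it remains the one substantive gap between your outline and a complete proof.
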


   \begin{remark}\label{rem_d}
   \rm
(i) In this note, we focus exclusively on the wave equation in low spatial dimensions
(i.e., $d\leq 2$). As previously mentioned, in higher dimension, 
the wave kernels $G^W$, unlike the those in \eqref{fSol}, 
are no longer functions, leading to extra difficulty in employing the
Malliavin-Stein method to establish the desired CLTs. 
Recent works by M. Ebina in \cite{Ebina1, Ebina2}, has successfully
established these CLTs for the stochastic nonlinear wave equation on $\R^d$
with $d\geq 3$.  It is then a natural extension to study the ASCLT 
in this high dimensional setting.

\smallskip
\noi
(ii)  
For the heat equation, we
simplify the presentation by focusing only on the regular case 
from \cite{NXZ22}, 
where the spatial correlation kernel is a nonnegative function. 
We do not address the rough case, which involves generalized functions 
for the spatial correlation kernel.  For instance,
in one-dimensional case, this includes the correlation 
whose spectral measure is $\mu(d\xi) = C_{H_1} |\xi|^{1-2H_1}$,
and $\gamma_0(t) = |t|^{2H_0 -2}$ for some explicit constant 
$C_{H_1}>0$
and $0 < H_1 < 1/2 < H_0 < 1$ such that
$H_0 + H_1 > 3/4$, or even more rough situations 
as discussed in \cite{LHW22}. 
We are optimistic that 
our strategy can be applicable in this rough case,
while  such an  investigation would inevitably require  
those very technical estimates from  \cite{NXZ22}.
   
   \end{remark}

Let us comment a bit our strategy and postpone the details 
to Section \ref{sec_proof}.
We will apply the powerful  Ibragimov-Lifshits' criterion
to prove Theorem \ref{thm_main}.

 \begin{proposition}\label{prop:IL} {\rm(}\cite[Ibragimov-Lifshits' criterion]{IL99}{\rm)}
A family of real-valued random variables $\{F_\theta\}_{\theta\geq 1}$ satisfies the {\rm ASCLT}
if $\theta\mapsto F_\theta$ is measurable almost surely, and
the following inequality holds

 \noi
\begin{align}
    \sup_{|s| \leq T}
    \int_2^\infty
    \dfrac{\E \big[  |\K_t(s)|^2\big]  }{t  \log t} dt <\infty,
  \label{IL}
\end{align}
for any finite $T >0$, where
\begin{align}\label{Kn}
\K_t(s)
\coloneqq \frac{1}{\log t} \int_1^t \frac{1}{\theta}
\big( e^{is F_\theta} - e^{-s^2/2} \big)  d\theta,
\quad t \in( 1, \infty).
\end{align}
\end{proposition}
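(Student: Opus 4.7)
My plan is to leverage the uniform-in-$s$ second-moment bound \eqref{IL} together with the inherent Lipschitz regularity of $t \mapsto \K_t(s)$ to pass from an integrated $L^2$-control to genuine weak convergence of the random measures $\nu_t^\omega$ towards $\zeta$, observing that $\K_t(s, \omega) = \widehat{\nu_t^\omega}(s) - e^{-s^2/2}$ is precisely the difference between the characteristic functions of $\nu_t^\omega$ and $\zeta$. The first step is to apply Fubini to the hypothesis: for each integer $T \geq 1$,
\[
\E\Big[\int_{-T}^{T}\int_2^\infty \frac{|\K_t(s)|^2}{t \log t}\, dt\, ds\Big] \leq 2T \sup_{|s| \leq T} \int_2^\infty \frac{\E[|\K_t(s)|^2]}{t \log t}\, dt < \infty,
\]
so, outside a null set common to every $T \in \N$, the double integral is finite almost surely.

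Next, I upgrade this integrability in $t$ to pointwise-in-$t$ decay by exploiting smoothness. Differentiating the formula $\K_t(s) = (\log t)^{-1} \int_1^t \theta^{-1}\big(e^{is F_\theta} - e^{-s^2/2}\big)\, d\theta$ in $t$ and using the trivial bound $|\K_t(s)| \leq 2$ gives $|\partial_t \K_t(s)| \les (t \log t)^{-1}$ uniformly in $(s,\omega)$. Changing variables via $u = \log\log t$, so that $du = dt/(t\log t)$, the map $u \mapsto \wt\K(u,s) := \K_{\exp\exp u}(s)$ becomes uniformly Lipschitz in $u$. Hence for each $T \in \N$ the nonnegative function
\[
\Psi_T(u,\omega) := \int_{-T}^T |\wt\K(u, s, \omega)|^2\, ds
\]
is Lipschitz in $u$ (with constant depending only on $T$) and has finite integral on $[\log\log 2, \infty)$ almost surely by Step 1. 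A nonnegative Lipschitz function with finite integral on a half-line must vanish at infinity (a bump of height $\eps$ is forced to have width $\ges \eps$, so only finitely many disjoint such bumps can fit), whence almost surely, for every $T \in \N$, $\|\K_t(\cdot,\omega)\|_{L^2([-T,T])} \to 0$ as $t \to \infty$.

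Finally, I convert this Fourier-side $L^2$-convergence into weak convergence of $\nu_t^\omega$. For any Schwartz function $f$ whose Fourier transform $\widehat f$ is supported in some $[-T, T]$, Fourier inversion together with Cauchy-Schwarz yields
\[
\Big|\int f\, d\nu_t^\omega - \int f\, d\zeta \Big| = \frac{1}{2\pi}\Big|\int \widehat f(s)\, \K_t(s,\omega)\, ds\Big| \leq \frac{\|\widehat f\|_{L^2}}{2\pi}\, \|\K_t(\cdot,\omega)\|_{L^2([-T,T])} \longrightarrow 0.
\]
Such $f$ form a dense subset of $C_0(\R)$, so fixing a countable dense subfamily and intersecting the corresponding full-measure events yields, almost surely, $\int f\, d\nu_t^\omega \to \int f\, d\zeta$ for every $f \in C_0(\R)$; this vague convergence, combined with $\nu_t^\omega(\R) = 1 = \zeta(\R)$, upgrades to weak convergence, which is the required ASCLT. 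The principal obstacle is the intermediate step: \eqref{IL} by itself is only an $L^2$-in-$t$ bound which does not per se imply pointwise decay, and it is the Lipschitz regularity in $t$ combined with the $s$-uniformity built into \eqref{IL} that allows one to promote it to the $L^2([-T,T])$-in-$s$ decay needed to close the Fourier-inversion argument.
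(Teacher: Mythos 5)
Your proof is correct. Note that the paper itself does not prove this proposition --- it defers to \cite{IL99} for the discrete version and to \cite[Proposition 3.3]{BXZ23} for the continuum one --- but your self-contained argument follows exactly the standard Ibragimov--Lifshits route: Tonelli to get almost sure finiteness of $\int_2^\infty\int_{-T}^{T}|\K_t(s)|^2 (t\log t)^{-1}\,ds\,dt$, the uniform bound $|\partial_t \K_t(s)|\lesssim (t\log t)^{-1}$ (i.e.\ Lipschitz continuity in the variable $\log\log t$) to upgrade integrability to the decay $\|\K_t(\cdot)\|_{L^2([-T,T])}\to 0$, and Fourier inversion against band-limited test functions plus conservation of total mass to conclude weak convergence of $\nu_t^\o$ to $\zeta$. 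Each of these steps is carried out correctly, so nothing further is needed.
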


In Ibragimov-Lifshits' original paper \cite{IL99},
the criterion
is proved for the discrete-time version.
For the proof of the above  continuum version,
see \cite[Proposition 3.3]{BXZ23}.
Note that Ibragimov-Lifshits' criterion
is not limited to the Gaussian limit (i.e., ASCLT),
as one can see from the original paper
and also from the recent application \cite{AN22}.
We expect that it will be useful in establishing
almost sure non-central limit theorem in the SPDE context,
for example, an almost sure non-central limit theorem in the framework of 
\cite{DT21}.

The said criterion requires us essentially to establish
logarithmic decay in  the second moment 
of the  difference of characteristic functions
of the (random)  probability $\nu_T^\o$ \eqref{def2}
 and standard Gaussian measure.
By some simple algebra, we only need to
bound the total-variation distances $d_{\rm TV}(  F_\theta, \NN(0,1))$
and  $d_{\rm TV}( \frac{   F_\theta -   F_\w}{\sqrt2}, \NN(0,1))$.
  While not claiming any originality,
we state below an abstract result that concludes  this discussion. 

\begin{proposition} \label{prop1818}
A family $\{F_\theta: \theta\geq 1\}$ satisfy the {\rm ASCLT} if 
\begin{align}\label{TV1a}
d_{\rm TV}(F_\theta,  \NN(0,1)) \leq C_1  \theta^{-\be_1}
\end{align}
 and
 \begin{align}\label{TV1b}
 d_{\rm TV} \big( \tfrac{F_\theta - F_\w}{\sqrt{2}},  \NN(0,1)\big) 
 \leq C_2  
 \big( \theta^{-\be_2}
 +  (\theta/\w)^{\be_3} \big) \,\,\text{for $\theta < \w$},
 \end{align}
 
 \noi
 where $C_1, C_2$ are constants that do not depend on $\theta$ and $\w$,
 whilst  $\be_i > 0$ for $i=1,2,3$.
 The above result remains valid, if the total-variation distances
 in \eqref{TV1a}-\eqref{TV1b} are replaced by $1$-Wasserstein distances.
\end{proposition}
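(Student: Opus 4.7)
The plan is to verify the Ibragimov--Lifshits criterion \eqref{IL} of Proposition \ref{prop:IL} using the two quantitative rates \eqref{TV1a}--\eqref{TV1b} as black boxes. I would concentrate on the TV formulation; the $1$-Wasserstein case is parallel, because the map $x\mapsto e^{isx}$ has Lipschitz constant $|s|$, so the estimates only pick up a polynomial factor in $|s|$ that is absorbed by the supremum over $|s|\leq T$.

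First, I would apply Fubini to write
\[
\E\big[|\K_t(s)|^2\big] = \frac{1}{(\log t)^2}\int_1^t\!\!\int_1^t \frac{\Psi(\theta,\w,s)}{\theta\w}\,d\theta\,d\w,
\]
with $\Psi(\theta,\w,s) \coloneqq \E\big[(e^{isF_\theta} - e^{-s^2/2})(e^{-isF_\w} - e^{-s^2/2})\big]$. Expanding the product and noting that $e^{-s^2} = \E[e^{i\sqrt{2}\,s\,Z}]$ for $Z\sim\NN(0,1)$, a short rearrangement gives
\[
\Psi(\theta,\w,s) = \big(\E[e^{is(F_\theta - F_\w)}] - e^{-s^2}\big) - e^{-s^2/2}\big(\E[e^{isF_\theta}] - e^{-s^2/2}\big) - e^{-s^2/2}\big(\E[e^{-isF_\w}] - e^{-s^2/2}\big).
\]
Splitting $e^{i\lambda x}$ into real and imaginary parts (each bounded by $1$) and invoking the definition \eqref{TVD}, each parenthesis is controlled by a TV-distance: the two outer ones via \eqref{TV1a}, and the inner one via \eqref{TV1b} applied to the standardized difference $(F_\theta - F_\w)/\sqrt{2}$, using the trivial identity $F_\theta - F_\w = \sqrt{2}\cdot (F_\theta - F_\w)/\sqrt{2}$ inside the characteristic function. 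For $\theta\leq\w$ this would yield
\[
|\Psi(\theta,\w,s)| \les \theta^{-\be_1} + \w^{-\be_1} + \theta^{-\be_2} + (\theta/\w)^{\be_3}.
\]

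The remaining step is routine bookkeeping. By the symmetry $\Psi(\w,\theta,s) = \overline{\Psi(\theta,\w,s)}$ I would restrict the double integral to $\{\theta\leq\w\}$ at the cost of a factor of $2$, and check that each of the four resulting single integrals is $O(\log t)$. The only one that requires a moment of thought is the decorrelation term, for which $\int_1^\w \theta^{\be_3-1}\,d\theta\leq \w^{\be_3}/\be_3$ cancels the $\w^{-\be_3}$ and reduces the inner integration to $\int_1^t d\w/\w = \log t$. Dividing by $(\log t)^2$ then gives $\E\big[|\K_t(s)|^2\big]\les 1/\log t$, uniformly in $|s|\leq T$, whence \eqref{IL} follows from the convergent tail $\int_2^\infty dt/[t(\log t)^2]<\infty$. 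I do not foresee any serious obstacle: once the cross-term $\Psi$ is correctly decomposed so that \eqref{TV1b} becomes applicable to the normalized difference $(F_\theta-F_\w)/\sqrt{2}$, everything else reduces to three applications of the quantitative hypotheses and a pair of elementary one-dimensional integrations.
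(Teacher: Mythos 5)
Your proposal is correct and follows essentially the same route as the paper: the expansion of $\E[|\K_t(s)|^2]$ into a decorrelation term involving $(F_\theta-F_\w)/\sqrt{2}$ plus marginal terms, the bound $|\E[e^{isX}]-e^{-s^2/2}|\les d_{\rm TV}(X,Z)$ (with the Lipschitz variant for the Wasserstein case), and the logarithmic bookkeeping all match the paper's argument. The only cosmetic difference is that you bound the integrand $\Psi(\theta,\w,s)$ pointwise before integrating, whereas the paper first splits the double integral into the two pieces $\mathbb{I}_t(s)$ and $\II_t(s)$ and treats them separately.
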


Here, the $1$-Wasserstein distance $d_{\rm Wass}(X, Y)$
of two real-valued random variables $X, Y$ is defined by
\begin{align}\label{WassM}
d_{\rm Wass}(X, Y)
:= \sup_{\| h' \|_\infty \leq 1} \big| \E[ h(X)] - \E[ h(Y)]  \big|,
\end{align}

\noi
where the supremum in \eqref{WassM} runs over all $1$-Lipschitz functions $h:\R\to\R$.
In some contexts (e.g., \cite{BXZ23}), 
it is more natural to use the bounds in  $1$-Wasserstein distances
in place of  \eqref{TV1a}-\eqref{TV1b}.
We postpone the proof of Proposition \ref{prop1818} to Section \ref{SEC_pre}.

The rest of this note is organized as follows:
We collect a few preliminaries in Section \ref{SEC_pre}
and we present the proof of Theorem \ref{thm_main}
in Section \ref{sec_proof}.

\section{Preliminaries}\label{SEC_pre}

In Section \ref{SEC2_M}, we present a few basics on 
Malliavin calculus, while in Section \ref{SEC2_IL},
we record a criterion of 
 Ibragimov and Lifshits.

\medskip

Assume that all probabilistic objects in this note are defined
on a rich enough probability space $(\O, \F, \PP)$.
We write $\| \bul\|_p$ to denote the $L^p(\O)$-norm
for $p\in[1,\infty]$
and 
we write $a (R) \les b (R)$ for
 $ \limsup_{R\to+\infty }a(R)/ b(R) < +\infty$,
 and $a(R)\sim b(R)$
 for 
 \[
 0< \liminf_{R\to+\infty }a(R)/ b(R) \leq  
  \limsup_{R\to+\infty }a(R)/ b(R) < +\infty,
  \] 
for any nonnegative functions $a$ and $b$.

Suppose the Gaussian noise $\dot{W}$ is defined as in the Introduction
such that both hypotheses {\bf (H1)} and  {\bf (H2)} hold. 
Under these conditions, we can rigorously built the isonormal framework 
needed for developing
the $L^2$ theory of Malliavin calculus. Let $C_c(\R_+\times\R^d)$
denote the set of all real continuous functions on $\R_+\times\R^d$ with compact support. We define the following inner product on $C_c(\R_+ \times \R^d)$: 

\noi
\begin{align}\label{iso_inner}
\langle h_1, h_2 \rangle_\H 
\coloneqq \int_{\R_+\times\R^{2d}}
 h_1(r, y)  h_2(r', y') \g_0(r-r') \g_1(y-y') drdr' dydy'
\end{align}

\noi
for any $h_1, h_2\in C_c(\R_+\times\R^d)$. Let $\H$ denote the closure 
of $C_c(\R_+\times\R^d)$ under the above inner product \eqref{iso_inner}. A random field $W = \{W(h): h\in\H\}$ is an isonormal Gaussian process
over the Hilbert space $\H$, if 
$W$ is a centered Gaussian family with covariance given by
\[
\E[W(h_1) W(h_2)]  
= \langle h_1, h_2 \rangle_\H.
\] 
% as in \eqref{iso_inner}.
Let $\s\{W\}$ denote the $\s$-algebra generated by the noise $\dot{W}$.
That is, $\s\{W\}$ is the  $\s$-algebra generated by 
$\{ W(h): h\in C_c(\R_+\times\R^d)\}$.
The well-known Wiener-It\^o chaos decomposition
 (see, e.g., \cite[Theorem 1.1.2]{Nua06}) asserts that
$L^2(\O, \s\{W\}, \PP)$ can be decomposed into mutually 
orthogonal closed subspaces (called Wiener chaoses):

\noi
\begin{align}\label{alt1}
L^2(\O, \s\{W\}, \PP) 
= \bigoplus_{n=0}^\infty 
 \C^W_n,
\end{align}

\noi
where $ \C^W_0 \simeq \R$ is the set of constant random variables, 
$ \C^W_n$ is called $n$-th Wiener chaos that consists of 
all multiple integrals of order $n$; see, e.g., \cite[Section 2.7]{blue}.
The $n$-th multiple integral operator $I_n$ is a bounded linear operator
from the $n$-th tensor product $\H^{\otimes n}$ to $\C^W_n$ with
the following orthogonality relation

\noi
\begin{align}\label{OP1}
\E[ I_n(f) I_m(g) ] = \ind_{\{n=m\}} n! \jb{ \wt{f},  \wt{g}  }_{\H^{\otimes n}}
\end{align}

\noi
with $\wt{f}$ denoting the canonical symmetrization of $f$;
see, e.g., \cite[Appendix B]{blue} for the Hilbert space notation. 
Alternative to \eqref{alt1}, we can express the 
Wiener-It\^o chaos decomposition as follows:
for any $F\in L^2(\O, \s\{W\}, \PP)$, there exist
\emph{symmetric}
kernels 
$f_n\in\H^{\otimes n}$ such that 

\noi
\begin{align}\label{alt2}
F  = \E[F] + \sum_{n\geq 1} I_n(f_n).
\end{align}
The membership of $F$ in $L^2(\O)$ is equivalent to the finiteness of 
$\sum_{n\geq 1} n! \| f_n\|^2_{\H^{\otimes n}}$. 
With this chaos expansion, we can define several Malliavin operators
in a convenient manner.

\subsection{Basic Malliavin calculus.} \label{SEC2_M}

Let us first define several relevant Malliavin operators
using the above  chaos expansions \eqref{alt2}, and present results specifically tailored to the solutions to \eqref{pam} and \eqref{HAM}.

\medskip

\noi
$\bul$ {\bf Malliavin derivative operator}. For $k\in\{1,2\}$, we let $\DD^{k,2}$ denote
the set of all square integrable random variables $F$ with the Wiener-It\^o decomposition as in \eqref{alt2}, such that 
\[
\sum_{n\geq 1} n! n^k \| f_n\|^2_{\H^{\otimes n}} < \infty.
\] 
For any $F\in\DD^{2,2}$ with the Wiener-It\^o decomposition  as in \eqref{alt2}, the Malliavin derivative $D F$ and second order Mallivin derivative $D^2 F$ are given by
\[
D F =  \sum_{n\geq 1} n  I_{n-1}(f_n)
\quad{\rm and}
\quad
D^2 F =  \sum_{n\geq 2} n(n-1)  I_{n-2}(f_n),
\]
which are random vectors in $\H$ and 
$\H^{\otimes 2}$, respectively. 
Note that the Hilbert space $\H$ may contain generalized 
functions, so that the random `function' $DF$ may not be valued pointwise in general.  The same comment applies to $D^2F$,
and when $D^2F$ is indeed a function, 
$D_{s,y}D_{r,z}F$ is symmetric
 in $(s,y)$ and $(r,z)$ so that often 
  we   state   bounds  only for   $r < s$;
  see, e.g., \eqref{DU} below.
  When $u(t, x)$ denotes the  solution to \eqref{pam} or \eqref{HAM}
  in this note, 
we have the following results.

\begin{proposition} \label{prop_MD}
Under the assumptions of Theorem \ref{thm_main}, we have 
$u(t,x)\in\DD^{2, 4} \supset \DD^{2,2}$, meaning that 
$|u(t,x)| + \| D u(t,x) \|_{\H} +\| D^2 u(t,x) \|_{\H\otimes \H} \in L^4(\O)$. Moreover,
the map $(r,y)\in\R_+\times\R^d\mapsto D_{r, y} u(t,x) \in\R$
is indeed a {\rm(}random{\rm)} function such that 
 for any finite $p\geq 2$
and for almost every $0< r < s < t \leq T$, and $x, y\in\R^d$,

\noi
\begin{align} \label{DU}
\big\| D_{s,y}   u(t,x) \big\|_{p} \lesssim_T G_{t - s} (x - y)
\,\,\,
{\rm and}
\,\,\,
\big\| D_{r, z}D_{s, y}   u(t,x) \big\|_{p} \lesssim_T G_{t - s}(x - y) G_{s - r} (y - z)
,
 \end{align}

\noi
where
 the above implicit constants in $\les_T$ do not
depend on $(r, s, t)$ but depend on $T$;
 see \cite[Theorem 3.1]{NXZ22}
 and   \cite[Theorem 1.3]{BNQSZ} 
for more details.

\end{proposition}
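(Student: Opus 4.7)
The plan is to derive mild integral equations for $Du$ and $D^2u$ by formally differentiating the Skorohod mild formulation \eqref{mild}, and then to close $L^p$-estimates via a Picard-type iteration combined with hypercontractivity on Wiener chaoses. Applying the Malliavin derivative $D$ formally to both sides of \eqref{mild} and using the product rule for the Skorohod integral gives, for $0 < s < t$ and $y \in \R^d$,
\[
D_{s,y} u(t,x) = G_{t-s}(x-y)\, u(s,y) + \int_s^t \int_{\R^d} G_{t-r}(x-z)\, D_{s,y} u(r,z)\, W(dr, dz),
\]
while iterating $D$ once more yields, for $0 < r < s < t$,
\[
D_{r,z} D_{s,y} u(t,x) = G_{t-s}(x-y)\, D_{r,z} u(s,y) + \int_s^t \int_{\R^d} G_{t-\tau}(x-w)\, D_{r,z}D_{s,y} u(\tau,w)\, W(d\tau, dw).
\]
The rigorous justification would proceed through the chaos expansion \eqref{CP1}: using $D_{s,y} I_p(f_{t,x,p}) = p\, I_{p-1}(f_{t,x,p}(\,\cdot\,,s,y))$ and reorganizing the permutations defining $f_{t,x,p}$, one checks that these identities hold in $L^2(\O;\H)$ and $L^2(\O;\H^{\otimes 2})$ respectively; convergence of the resulting series in the appropriate norms would simultaneously give the memberships $u(t,x)\in\DD^{2,4}\supset\DD^{2,2}$.

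To extract the pointwise kernel bounds \eqref{DU}, I would combine three ingredients. First, the $L^p(\O)$-moments of $u(t,x)$ are uniformly bounded on compact time intervals, as already established in \cite{HHNT15,BS17}. Second, for any finite $p\geq 2$ and $f\in\H^{\otimes n}$, hypercontractivity on the $n$-th Wiener chaos gives $\|I_n(f)\|_p \leq (p-1)^{n/2}\|I_n(f)\|_2$, so $L^p$-estimates on a multiple integral reduce, chaos by chaos, to $L^2$-computations furnished by the orthogonality relation \eqref{OP1}. Third, taking $\|\cdot\|_p$-norms in the mild equation for $Du$ and applying Minkowski's inequality in the time-space-noise variables would produce a quadratic Volterra-type inequality for $\|D_{s,y} u(t,x)\|_p$ whose kernel is built from the propagator $G$ and the correlation $\gamma_0\otimes\gamma_1$. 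A Gronwall argument applied to the normalized quantity $G_{t-s}(x-y)^{-1}\|D_{s,y} u(t,x)\|_p$ would then yield the first bound in \eqref{DU}, and the same scheme, with an extra application of the propagator $G_{s-r}(y-z)$, would deliver the bound on $D^2u$.

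The principal obstacle is the colored-in-time nature of the noise: without a martingale structure the Burkholder-Davis-Gundy inequality is unavailable, so one cannot close the $L^p$-estimates in a single step. The argument must instead proceed chaos by chaos, using hypercontractivity to pass from $L^2$ to $L^p$ and then resumming the chaos expansions; in turn this requires delicate uniform control of the convolutions of $\gamma_0$ against $\gamma_1$ against $G$ over time-ordered simplices, which is precisely where Dalang's condition \eqref{D99} together with the explicit form of $G^H$ and $G^W$ in \eqref{fSol} enters. This bookkeeping has been executed in \cite[Theorem 3.1]{NXZ22} for the parabolic case and in \cite[Theorem 1.3]{BNQSZ} for the hyperbolic case with $d\leq 2$, from which the claimed estimates \eqref{DU} can be read off directly.
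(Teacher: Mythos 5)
The paper offers no proof of Proposition \ref{prop_MD} beyond citing \cite[Theorem 3.1]{NXZ22} and \cite[Theorem 1.3]{BNQSZ}, and your argument ultimately rests on exactly the same citations, so the two are essentially the same. Your preliminary sketch (chaos expansion of $D u$ and $D^2 u$, hypercontractivity to pass from $L^2$ to $L^p$ chaos by chaos, and the caveat that the colored-in-time noise rules out a one-step BDG/Gronwall closure) is an accurate description of how those cited results are actually proved, so nothing is amiss.
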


For notational convenience, we say a random variable 
$F$ satisfies the property \eqref{ABSH} if 
$F\in\mathbb{D}^{2,2}$ and almost surely, 
$DF\in | \H|$ and
$D^2 F\in | \H^{\otimes 2}|$
meaning that 

\noi
\begin{align} \tag{\textbf{P}} \label{ABSH}
\begin{aligned}
 &\begin{cases}
\text{$(r,y)\in\R_+\times\R^d\mapsto | D_{r, y} F| \in\R$
belongs to $\H$}\\
\text{$(r,y, s, z)\in(\R_+\times\R^d)^2\mapsto | D_{s,z}D_{r, y} F| \in\R$
belongs to $\H^{\otimes2}$}.
\end{cases}
\end{aligned}
\end{align}

It is not difficult, via direct computations, to 
show that  the solution $u(t,x)$ in Proposition \ref{prop_MD}
and the corresponding spatial integrals 
 \eqref{def_f} satisfy the property \eqref{ABSH}.

\medskip

\noi
$\bul$ {\bf Ornstein-Uhlenbeck operators}. For $F\in\DD^{2,2}$  written as in \eqref{alt2},
we define 
$
LF = \sum_{n\geq 1} -n I_n(f_n).
$
Let $F\in L^2(\O)$ written as in  \eqref{alt2}. Suppose that  $\E[F] = 0$, 
we define 
$
L^{-1}F = \sum_{n\geq 1} -\frac{1}{n} I_n(f_n).
$
Here, $L$ is called the Ornstein-Uhlenbeck operator associated to 
the noise $\dot{W}$ and its domain coincides with $\DD^{2,2}$.
The operator $L^{-1}$ is called the pseudo-inverse of $L$ due to 
the fact that
$L^{-1} LF  = F - \E[ F]$ for $F\in\DD^{2,2}$,
and $LL^{-1}F = F$ for $F\in L^2(\O, \s\{W\}, \PP)$ with zero mean. 
We define $P_t = e^{t L}$ for $t\in\R_+$, which is called the Ornstein-Uhlenbeck
semigroup and satisfies the contraction property:

\noi
\begin{align}\label{conP}
\| P_t F \|_p \leq  \| F \|_p
\end{align}

\noi
for any $F\in L^p(\O, \s\{ W\}, \PP)$
with  $p\in[1,\infty)$. 
It is not difficulty to see
that for $F$ as in \eqref{alt2},
$P_t F = \E[F]  + \sum_{n\geq 1} e^{-tn} I_n(f_n)$.
Then, using the orthogonality relation \eqref{OP1}, we get 
$\| P_t F\|^2_2 = | \E[F] |^2  + \sum_{n\geq 1} e^{-2tn} \| I_n(f_n)\|^2_2
\leq \| F\|_2^2$ with equality when and only when $F$ is a constant 
or $t = 0$. This gives a  proof of \eqref{conP} for $p=2$.
The general case can be easily 
proved by using Mehler formula (see, e.g., \cite[Proposition 2.8.6]{blue}).

 Using the chaos expansion, it is not difficult to show that 
 $
 -D_\bul L^{-1}F = \int_0^\infty e^{-t} P_t D_\bul F dt
 $, 
 from which, with Minkowski's inequality and \eqref{conP},
  we can have

\noi
  \begin{align} \label{ineq1a}
    \begin{aligned} 
   \| D_\bul L^{-1}F\|_p 
   &\leq   \int_0^\infty e^{-t}  \| P_t D_\bul F \|_p dt  \\
&\leq    \int_0^\infty e^{-t}  \|  D_\bul F \|_p dt 
= \|  D_\bul F \|_p 
  \end{aligned}  
  \end{align}
  
  \noi
 for any $p\in[1,\infty)$, whenever $D_\bul F$ is a function.

 \medskip

\noi
$\bul$ {\bf Integration by parts formula and chain rule.}
The divergence operator $\dl$ is the adjoint operator for $D$, 
which can be characterized by the following integration by part
formula:
\begin{align}\label{ibp0}
\E[ \langle DF, u \rangle_\H ] =\E[ F \dl(u) ]
\end{align}

\noi
 for any $F\in\mathbb{D}^{1,2}$ and $u\in\text{dom}(\dl)$.
 Here $\text{dom}(\dl)$ is the set of random vector $u\in L^2(\Omega; \H)$
 such that 
 there is some finite constant $C_u$
 satisfying $ |\E[ \langle DF, u \rangle_\H ] | \leq C_u\| F\|_2$
 for any $F\in\mathbb{D}^{1,2}$.
 It is not difficult to prove via chaos expansion that
 $L = -\dl D$ on $\mathbb{D}^{2,2}$ (see, e.g., \cite[Proposition 1.4.3]{Nua06}).
 For $\phi:\R\to\R$ Lipschitz, differentiable and  $F\in\mathbb{D}^{1,2}$,
 it is known that $\phi(F)\in\mathbb{D}^{1,2}$ with
 $D\phi(F) = \phi'(F) DF $;
 see \cite[Proposition 1.2.3]{Nua06}. Then, we can easily derive
 the following formula:

\noi
\begin{align}\label{Stein_b2}
 \E[ G\phi(F) ] 
 =  \E \big[ \langle DF, - DL^{-1}G \rangle_\H \phi'(F) \big]
\end{align}
for any differentiable and Lipschitz function $\phi$, $F\in\mathbb{D}^{1,2}$, and  
$G\in L^2(\Omega)$ with $\E[G]=0$.
Indeed, using $G = L L^{-1}G$ and the above chain rule with 
\eqref{ibp0} and $L = -\dl D$, we get 

\noi
\begin{align*}
 \E[ G\phi(F) ] 
 &=  \E[ -\dl D L^{-1}G\phi(F) ] 
 =\E[ \langle - D L^{-1}G, D\phi(F) \rangle_\H ]  \\
 &= \E[ \langle - D L^{-1}G, DF  \rangle_\H \phi'(F)  ].
\end{align*}
Note that taking $\phi(x)=x$, we get 

\noi
\noi
\begin{align}\label{Stein_b2bb}
 {\rm Cov}(F, G)
 =  \E \big[ \langle DF, - DL^{-1}G \rangle_\H \big]
\end{align}
for centered random variable $G$  with finite second moment
and $F\in\mathbb{D}^{1,2}$.

Now  we  state a key bound in this note,
which arises in the so-called improved second-order 
Gaussian Poincar\'e inequality;
see \cite{Vid20, BNQSZ}.

\begin{proposition}\textup{(\cite[Proposition 1.9]{BNQSZ})}
\label{lmm_d-dl}
Recall the notation
in \eqref{ABSH}.
 Let $F_1, F_2$ be centered random variables in $\DD^{2,4}$ such that
 $D F_j \in |\H|$ and $D^2 F_j\in  |\H^{\otimes 2}|$ almost surely
 for $j=1,2$, {\rm(}i.e., $F_1, F_2$ satisfy the property  \eqref{ABSH}{\rm)}.
  Then, 
 
 \noi
 \begin{align*}% \label{eq_d-dl}
  \Var \big(  \jb{DF_1, -DL^{-1} F_2 }_\H \big) \lesssim \cA (F_1, F_2) + \cA (F_2, F_1),
 \end{align*}
 where  
  \begin{align} \notag %\label{QA}
 \begin{aligned}
\cA(F_1, F_2) &
\coloneqq 
 \int_{\R_+^6\times\R^{6d}}  dr dr' dsds' d\theta d\theta'  dzdz' dydy' d\w d\w' \\
&\qquad \times 
\g_0(s-s') \g_0(r-r') \g_0(\theta - \theta')  
\g(z-z') \g(y-y') \g(\w - \w')  \\
&\qquad  \times \| D_{r, z} D_{\theta, \w} F_1 \|_4  
\| D_{s,y}D_{\theta', \w'} F_1 \|_4  
\| D_{r', z'} F_2\|_4  \| D_{s', y'} F_2\|_4. 
 \end{aligned}
 \end{align}
 
 \end{proposition}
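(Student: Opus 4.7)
The plan is to reduce the variance of $G := \jb{DF_1, -DL^{-1}F_2}_\H$ to the $L^2(\O;\H)$-norm of its Malliavin derivative via the standard Gaussian Poincar\'e inequality
\[
\Var(G) \leq \E\big[ \|DG\|_\H^2 \big],
\]
which applies because both $F_j$ satisfy \eqref{ABSH} and lie in $\DD^{2,4}\subset\DD^{1,2}$, and to note in passing that $G$ has mean $\mathrm{Cov}(F_1,F_2)$ by \eqref{Stein_b2bb}. The task is then to expand $DG$ through the Leibniz rule, apply a four-factor H\"older estimate, and use Mehler's contractivity in \eqref{conP} to strip all occurrences of $L^{-1}$ out of the resulting $L^4$-norms, before matching what remains with $\cA(F_1,F_2)+\cA(F_2,F_1)$.

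Writing $G = \int \g_0(s-s')\g_1(y-y') D_{s,y}F_1\cdot(-D_{s',y'}L^{-1}F_2)\, ds\, dy\, ds'\, dy'$ and pushing $D_{r,z}$ through the integral, I would split $D_{r,z}G = T_1(r,z) + T_2(r,z)$, where $T_1$ carries the outer derivative onto the $F_1$-factor (producing $D^2F_1$ paired with $DL^{-1}F_2$) while $T_2$ carries it onto the $F_2$-factor (producing $DF_1$ paired with $D^2L^{-1}F_2$). Expanding $\|DG\|_\H^2$ then yields four cross contributions $T_{ij} = \int \g_0(r-r')\g_1(z-z') T_i(r,z) T_j(r',z')\, dr\, dz\, dr'\, dz'$ for $i,j\in\{1,2\}$. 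For each, the generalized H\"older inequality $|\E[X_1 X_2 X_3 X_4]|\leq \prod \|X_i\|_4$, applied pointwise in the twelve integration variables, together with the Mehler-based bounds $\|D_{\bul}L^{-1}F_2\|_4\leq \|D_{\bul}F_2\|_4$ and $\|D_{\bul}D_{\bul}L^{-1}F_2\|_4\leq \|D_{\bul}D_{\bul}F_2\|_4$ (following from \eqref{conP} as in \eqref{ineq1a}), eliminates $L^{-1}$ from every $L^4$-factor.

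What results are twelve-fold integrals of products of four $L^4$-norms against the kernel $\g_0^{\otimes 3}\g_1^{\otimes 3}$. For the diagonal contribution, the integrand of $T_{11}$ is a product of two $\|D^2F_1\|_4$ factors and two $\|DF_2\|_4$ factors, and after a relabeling of dummy variables (using the symmetry of $\g_0,\g_1$ and of the second Malliavin derivative) it is identified exactly with $\cA(F_1,F_2)$; symmetrically $T_{22}\leq \cA(F_2,F_1)$. For the off-diagonal terms $T_{12},T_{21}$, whose integrands contain one $\|D^2F_1\|_4$, one $\|DF_1\|_4$, one $\|D^2F_2\|_4$, and one $\|DF_2\|_4$, I would apply AM-GM pointwise, $ab\leq \tfrac12(a^2+b^2)$ with a suitable pairing, to dominate each by a convex combination of the two diagonal integrands.

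The main obstacle is the combinatorial bookkeeping in the last step: the twelve dummy variables are coupled in three pairs through the $\g_0\g_1$ kernels, and one has to verify that the induced bipartite coupling graph on the four $L^4$-factors of each $T_{ij}$ can be relabeled so as to coincide with the graph underlying $\cA(F_1,F_2)$ or $\cA(F_2,F_1)$ (namely one $F_1$-$F_1$ edge and two $F_1$-$F_2$ edges, after the roles of $F_1$ and $F_2$ are determined by where the second derivatives sit). Making this graph-matching explicit, and pairing the factors correctly in the AM-GM step on the mixed terms $T_{12}, T_{21}$ so that the outcome remains inside $\cA(F_1,F_2)+\cA(F_2,F_1)$ with an absolute constant, is where the proof is most delicate.
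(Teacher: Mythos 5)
The paper itself does not prove this proposition: it is imported verbatim from \cite[Proposition 1.9]{BNQSZ}, so your attempt has to be measured against the proof given there, which indeed follows the skeleton you describe --- Gaussian Poincar\'e inequality $\Var(G)\le\E[\|DG\|_\H^2]$, Leibniz splitting $DG=T_1+T_2$, the four-factor H\"older bound, and the semigroup contraction (with $-D^2L^{-1}F=\int_0^\infty e^{-2t}P_tD^2F\,dt$ handling the second-order factor). Your treatment of the diagonal terms $T_{11},T_{22}$, including the use of the symmetry of $D^2$ to relabel which slot carries the ``middle'' coupling, correctly identifies them with $\cA(F_1,F_2)$ and $\cA(F_2,F_1)$.

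The genuine gap is in the cross terms $T_{12},T_{21}$, and the tool you propose there would fail. The integrand of $T_{12}$ contains exactly one factor $\|D^2_{\cdot}F_1\|_4$ and one factor $\|D_{\cdot}F_2\|_4$, whereas the integrand of $\cA(F_1,F_2)$ contains two copies of each, evaluated at \emph{distinct} variable groups coupled through $\g_0(\theta-\theta')\g_1(\w-\w')$. Applying $ab\le\tfrac12(a^2+b^2)$ pointwise to any pairing of the four factors produces squared factors sitting at a single point (e.g.\ $\|D_{r,z}D_{\theta,\w}F_1\|_4^2$), which is not of the form $\cA$; worse, the variables of the factors discarded from each half then appear only inside the kernels, and the resulting integrals of $\g_0\g_1$ over those free variables diverge. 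No pointwise domination of the mixed integrand by a convex combination of the two diagonal integrands exists. The correct step is taken \emph{before} expanding: either bound $\|DG\|_\H^2=\|T_1+T_2\|_\H^2\le 2\|T_1\|_\H^2+2\|T_2\|_\H^2$ in the Hilbert space $\H$, which removes the cross terms outright, or apply Cauchy--Schwarz for the semi-inner product $(f,g)\mapsto\int f(r,z)g(r',z')\g_0(r-r')\g_1(z-z')\,dr\,dr'\,dz\,dz'$ to the middle edge of $T_{12}$, giving $|T_{12}|\le\sqrt{\cA(F_1,F_2)\,\cA(F_2,F_1)}\le\tfrac12\big(\cA(F_1,F_2)+\cA(F_2,F_1)\big)$. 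Both routes rely on the nonnegative-definiteness of $\g_0$ and $\g_1$ from {\bf(H1)}--{\bf(H2)}, an ingredient absent from your outline; with this substitution the rest of your argument goes through.
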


\medskip

\subsection{Proof of  Proposition \ref{prop1818}}
\label{SEC2_IL}
 
We conclude this section with the proof of  Proposition \ref{prop1818}.

\begin{proof}[Proof of Proposition \ref{prop1818}]

 According to Proposition \ref{prop:IL},
 the ASCLT holds  for  $\big\{ F_\theta: \theta\geq 1 \big\}$    if 

 \noi
\begin{align}
    \sup_{|s| \leq T}
    \int_2^\infty
    \dfrac{\E \big[  |\K_t(s)|^2\big]  }{t  \log t} dt <\infty
\notag
\end{align}
for any finite $T >0$, where  $\K_t(s)$ is   as in \eqref{Kn}.
 Expanding $|\K_t(s)|^2$, 
we get

\noi
\begin{align*}
 |\K_t(s)|^2
 &=  \frac{1}{(\log t)^2}
 \int_{[1,t]^2} \frac{1}{\theta\w}
\big( e^{is   F_\theta} - e^{- \frac{s^2}{2} } \big)
\big( e^{- is   F_\w} - e^{- \frac{s^2}{2} } \big) d\theta d\w  \\
 &=  \frac{1}{(\log t)^2}
 \int_{[1,t]^2} \frac{1}{\theta\w}
       \big( e^{is (  F_\theta -   F_\w)} + e^{-s^2} - e^{is   F_\theta } 
       e^{- \frac{s^2}{2} }    - e^{ - is   F_\w } e^{-\frac{s^2}2 } \big) d\theta d\w  \\
 &=  \mathbb{I}_t(s) -  e^{ - \frac{s^2}{2}}   \II_t(s),
 \end{align*}

 \noi
 where

 \noi
 \begin{align}\label{12s}
 \begin{aligned}
 \mathbb{I}_t(s)
 &\coloneqq
  \frac{1}{(\log t)^2}
 \int_{[1,t]^2} \frac{1}{\theta\w}\Big( e^{i s (  F_\theta -  F_\w)} -  e^{-s^2} \Big)\, 
 d\theta d\w ,
\\
  \II_t(s) &\coloneqq  \frac{1}{\log t} \int_1^t \frac{1}{\theta} \big( e^{i s   F_\theta } 
  + e^{- i s   F_\theta } - 2 e^{-\frac{s^2}{2}} \big) \, d\theta .
 \end{aligned}
 \end{align}

 \noi
Therefore, it suffices to show that
\begin{align}
\label{A12}
\mathbf A_1(s) \coloneqq \int_2^\infty \frac{  \E\big[ \mathbb{I}_t(s) \big] }{t\log t} dt
 \quad
  \text{and}
   \quad
   \mathbf A_2(s) \coloneqq \int_2^\infty \frac{  \E\big[ \II_t(s) \big] }{t\log t} dt,
    \quad s\in [-T, T]
\end{align}

\noi
are both uniformly bounded for any given $T>0$.

\medskip

\noi
$\bul$ {\bf Estimation for $\mathbf{A}_2$.~} %Let us first deal with the  $\mathbf A_2$ term.
Recall that
  $\E [e^{i s Y}] = e^{- \frac{s^2}{2}}$ with $Y\sim \NN(0,1)$,
  and for any real random variable $X$,
\begin{align} \label{WA0}
\big| \E [e^{i s X}]  -\E [e^{i s Y}]  \big| 
= \big| \E [e^{i s X}] - e^{- \frac{s^2}{2}} \big| \leq 4 \, d_{\rm TV}(X, Y),
\end{align}

\noi
where the total-variation distance $ d_{\rm TV}$  is defined as in
\eqref{TVD}. While using the local Lipschitz property
of the complex exponentials, 
we have 
\begin{align}\label{WA1}
\sup_{|s|\leq T}\big| \E [e^{i s X}]  -\E [e^{i s Y}]  \big| 
\leq  2T \, d_{\rm Wass}(X, Y).
\end{align}

Therefore, it follows from \eqref{WA0}-\eqref{WA1} that

\noi
\begin{align}
\begin{aligned}
\Big| \mathbb{E}\big[ e^{i s   F_\theta } 
+ e^{- i s    F_\theta} - 2 e^{-\frac{s^2}{2}} \big]  \Big|
&  \leq \min\{  8 \, d_{\rm TV}\big(  F_\theta, Y\big), 
4T \, d_{\rm Wass}(F_\theta, Y) \}.
%  \lesssim |s|k^{- \frac{\alpha}{1 + \alpha}},
\end{aligned}
\label{T4}
\end{align}

\noindent

Therefore, it follows 
from \eqref{12s}, \eqref{T4}, and \eqref{TV1a} that  
for any finite $T > 0$,

\noi
\begin{align*}
\sup \big\{ |\mathbf A_2(s)| : s\in [-T, T]   \big\}
\les \int_2^\infty \frac{1}{t \log^2 t} \int_1^t \frac{1}{\theta^{1+\beta_1}} d\theta dt < \infty,
\end{align*}

\noi
that is,  $\sup\{ |\mathbf A_2(s)| : s\in [-T, T]   \} <\infty$
for any finite $T>0$.

\medskip

\noi
$\bul$ {\bf Estimation for $\mathbf{A}_1$.~}  Using  the inequality,
 \begin{align*}
\Big| \E\big[ e^{i s (  F_\theta -   F_\w)} -  e^{-s^2}  \big] \Big|
&= \Big| \E \Big[ e^{i \sqrt{2} s \big(\frac{  F_\theta -   F_\w}{\sqrt{2}}\big)} 
             - e^{i \sqrt{2}s Y}  \Big] \Big| \\
&\leq   \min\Big\{ 4\,  d_{\rm TV}\Big( \tfrac{  F_\theta -   F_\w}{\sqrt{2}}, Y \Big),
2\sqrt{2}T \,  d_{\rm Wass}\Big( \tfrac{  F_\theta -   F_\w}{\sqrt{2}}, Y \Big) \Big\},
\end{align*}
we can write, with ${\rm dist} = d_{\rm TV}$ or $d_{\rm Wass}$, 

 \noi
 \begin{align}
 \sup_{|s|\leq T} \mathbf A_1(s)
& \les   \int_2^\infty \frac{1 }{t (\log t)^3}
 \bigg( \int_{[1,t]^2} \frac{1}{\theta\w} \,
 {\rm dist}\big( \tfrac{  F_\theta -   F_\w}{\sqrt{2}}, Y \big)
 \, d\theta d\w   \bigg)  dt  \notag \\
 &\les  \int_2^\infty \frac{1 }{t (\log t)^3}
 \bigg( \int_{1 < \theta < \w < t} \frac{1}{\theta\w} \,
  {\rm dist}\big( \tfrac{  F_\theta -   F_\w}{\sqrt{2}}, Y \big)
 \, d\theta d\w  \bigg)  dt ,
  \label{T5a}
 \end{align}

\noi
which is finite due to the assumption \eqref{TV1b}.

Hence the proof of Proposition \ref{prop1818} is completed.
\qedhere

\end{proof}

\section{Proof of Theorem \ref{thm_main}}\label{sec_proof}

In this section, we provide the proof of our main result Theorem \ref{thm_main}.  
According to Proposition \ref{prop1818},
we  only need to check the conditions \eqref{TV1a}
and  \eqref{TV1b}
for  the spatial integrals $\wh F_R$ \eqref{def_f} in place of $F_\theta$.

Note that 
Theorem \ref{thm_old} implies that
in any of the four cases in \eqref{QCLT1}, there is some positive constant
$ b > 0$ such that 

\noi
\begin{align}\label{QCLT2}
d_{\rm TV}(\wh F_\theta, Y) \leq C \theta^{-b}.
\end{align}

\noi
That is, the condition  \eqref{TV1a} is verified. 

Next, we will show that    there exist positive real numbers 
$\beta_1$ and  $\beta_2$
such that

\noi
\begin{align}\label{claima}
d_{\rm TV}\Big( \tfrac{\wh F_\theta - \wh F_\w}{\sqrt{2}}, Y \Big)
\les  \theta^{-\beta_1}   + (\theta/\w)^{\beta_2}
\end{align}

\noi
for   $1 < \theta < \w <\infty$.
This constitutes   the bulk of the proof.

The bound in \eqref{claima} is obtained using techniques analogous to those employed in deriving the bound in \eqref{QCLT2}. We provide a brief outline of these techniques below.
Observe that the random variable $ \tfrac{\wh F_\theta - \wh F_\w}{\sqrt{2}}$
has mean zero and variance 

\noi
\begin{align}\label{VVAR}
V_{\theta, \w} \coloneqq {\rm Var}\Big(\tfrac{\wh F_\theta - \wh F_\w}{\sqrt{2}}\Big)
= 1 - {\rm Cov} \big(\wh F_\theta, \wh F_\w \big).
\end{align}

\noi
Thus,  applying Stein's bound (see, e.g., \cite[Theorem 3.3.1]{blue}),
we get 

\noi
\begin{align}\label{Stein_bdd}
d_{\rm TV}( G, Y)
\leq \sup  \big| \E[ G\phi(G)  - \phi'(G)] \big|  
\quad\text{with $G\coloneqq\tfrac{\wh F_\theta - \wh F_\w}{\sqrt{2}}$},
\end{align}

\noi
where the above supremum runs over bounded, differentiable functions
$\phi:\R\to\R$ with $\|\phi\|_\infty\leq \sqrt{\pi/2}$ and $\| \phi' \|_\infty\leq 2$.
In view of \eqref{Stein_b2bb},
 the inner product $\langle DG, - DL^{-1}G \rangle_\H$
 has mean 
\[
\E[ \langle DG, - DL^{-1}G \rangle_\H ] = \E[ G^2] = V_{\theta, \w}.
\]
Then, it follows from \eqref{Stein_bdd}, \eqref{VVAR},
and \eqref{Stein_b2} 
with the Cauchy-Schwarz  inequality  that 

\noi
\begin{align}
\label{Stein_b3}
\begin{aligned}
d_{\rm TV}\Big(  \tfrac{\wh F_\theta - \wh F_\w}{\sqrt{2}}, Y \Big)
& \leq 2  \big| 1 - V_{\theta, \w}\big|  
+  2  \E\big|  \langle DG, - DL^{-1}G \rangle_\H  - V_{\theta, \w}\big|  \\
&\leq  2 \big|  {\rm Cov}(\wh F_\theta, \wh F_\w  ) \big|
 +  \sqrt{ \Var \big(  \big\langle D (\wh F_\theta - \wh F_\w), 
 -DL^{-1} (\wh F_\theta - \wh F_\w) \big\rangle_\H \big) }.
\end{aligned}
\end{align}

\noi
Recalling our goal \eqref{claima}, it suffices to show that, 
for any $1< \theta < \w <\infty$,

 \noi
 \begin{align} \label{COV_b1}
 \big|  {\rm Cov}(\wh F_\theta, \wh F_\w  ) \big|
 \les  (\theta/\w)^{\be_2}
\end{align}
with   $\be_2 = \tfrac{d}{2}$ in
\textbf{(case 1)} and \textbf{(case 3)} 
and 
 $\be_2 = \tfrac{\al}{2}$ in
\textbf{(case 2)} and \textbf{(case 4)}
specified in \eqref{QCLT1},
where the above implicit constant does not depend on 
$(\theta, \w)$. 
The bound \eqref{COV_b1} will be proved in  Section \ref{SEC22}.
For the variance term in \eqref{Stein_b3}, 
it is essential to estimate 
\begin{align}\label{term1}
\Var \big(  \big\langle D \wh F_\theta, 
 - DL^{-1}  \wh F_\w \big\rangle_\H \big)
 \end{align}
 
 \noi
for $\theta, \w\in(1,\infty)$, concerning the bilinearity of the inner product operation,
 the linearity of the operators $D$ and $L^{-1}$,
 and the elementary inequality
 $\Var(X_1+X_2) \leq 2\Var(X_1) +2\Var(X_2) $
 for any square-integrable random variables $X_1, X_2$.
When $\theta = \w$, the estimate for \eqref{term1} has been established in 
\cite[Section 3.1]{NXZ22} and 
\cite[Section 4.2]{BNQSZ}, where it is shown that, with an implicit constant independent of $\theta$,

\noi
\begin{align}\label{ob1}
\Var \big(  \big\langle D \wh F_\theta, 
 -DL^{-1}  \wh F_\theta \big\rangle_\H \big)
 \les \begin{cases}
   \theta^{-d} \quad\text{in \textbf{(case 1)} and \textbf{\textbf{(case 3)}}} \\[0.5em]
    \theta^{-\al} \quad\text{in \textbf{(case 2)} and \textbf{(case 4)}}.
 \end{cases}
\end{align}

\noi
The derivation  of \eqref{ob1} relies on the ideas around
the so-called  second-order Gaussian Poincar\'e inequality
(\cite{Ch08, Vid20}),  see Proposition \ref{lmm_d-dl}. This inequality, utilized in \cite{NXZ22, BNQSZ}, will also
 play a crucial role when estimating the term \eqref{term1}
for $\theta \neq \w$. In Section \ref{SEC23},
we will show for $1 < \theta < \w$:

\noi
\begin{align}  \label{ob2}
\begin{aligned} 
& \Var \big(  \big\langle D \wh F_\theta, 
 -DL^{-1}  \wh F_\w \big\rangle_\H \big)
 +\Var \big(  \big\langle D \wh F_\w, 
 -DL^{-1}  \wh F_\theta \big\rangle_\H \big) \\
 &\qquad \qquad
 \les \begin{cases}
   \theta^{-d} \quad\text{in \textbf{(case 1)} and \textbf{\textbf{(case 3)}}} \\[0.5em]
    \theta^{-\al} \quad\text{in \textbf{(case 2)} and \textbf{(case 4)}}.
 \end{cases}
\end{aligned}
\end{align}
Therefore, the claim \eqref{claima}
follows immediately from 
\eqref{Stein_b3}, \eqref{COV_b1},
\eqref{ob1}, and \eqref{ob2}.
Hence the proof is complete. 
\qed

\bigskip

It remains 
to 
justify \eqref{COV_b1} and \eqref{ob2}.

\subsection{% Covariance bounds: 
Proof of (\ref{COV_b1})} \label{SEC22}

 In this subsection, we will show \eqref{COV_b1} for 
 the four cases specified in \eqref{QCLT1}.
%  Let us first record the order of the limiting variance of $F_R$ (see \eqref{def_f}).
First, we present the precise asymptotic relation of the limiting variance of $F_R$ in \eqref{def_f}, as cited from \cite[Theorems 1.6 and 1.7]{NZ20a}
for \eqref{pam} and 
\cite[Theorem 1.4]{BNQSZ}
for \eqref{HAM}:

 \noi
 \begin{align}\label{sR_asymp}
 \s_R = \sqrt{\Var (F_R)} \sim 
 \begin{cases}
  R^{\tfrac{d}{2}}  &\quad\text{in \textbf{(case 1)} and \textbf{\textbf{(case 3)}}}\\[0.5em]
   R^{d - \tfrac\al2} &\quad\text{in \textbf{(case 2)} and \textbf{(case 4)}},
 \end{cases}
 \end{align}

 \noi
 where we write $a_R\sim b_R$ to mean
 $0< \liminf_{R\to+\infty }a_R/ b_R \leq  \limsup_{R\to+\infty }a_R/ b_R < +\infty$.
%  Note that the asymptotic relations  \eqref{sR_asymp}
%  can be found
 
%  In what follows,   we hide immaterial constants in $\les$.

\medskip
\noi
$\bul$ {\bf  \textbf{(case 1)} and \textbf{\textbf{(case 3)}}.}
  It follows from \cite[Theorem 1.6 for PAM]{NZ20a} for \textbf{(case 1)}
  and \cite[Formulas (4.8) and (4.10) for HAM]{BNQSZ} for \textbf{\textbf{(case 3)}}
  that 
  there exists a \textbf{non-negative} function $\Phi_{s,t} \in L^1 (\R^d)$ such that
  
  \noi
  \begin{align}\label{def_Phi}
    \Phi_{s,t} (x - y) \coloneqq {\rm Cov}\big(  u(t,x), u(s,y) \big).
  \end{align}
   As a result, taking \eqref{sR_asymp} into  account, 
   we get for  $R' \geq R > 1$, 

\noi
\begin{align*}
\big| {\rm Cov}( \wh{F}_R ,  \wh{F}_{R'}   ) \big|
&\les (RR' )^{-\frac{d}{2} } \bigg\vert 
\int_{|x| < R} dx\int_{|y|  <R'} dy \, {\rm Cov}\big(  u(t,x), u(t,y) \big)  \bigg\vert \\
&\leq (RR' )^{- \frac{d}{2} }
\int_{|x| < R} dx\int_{\R^d} dz \Phi_{t,t} (z) \les  \big( \tfrac{R}{R'} \big)^{d/2}.
\end{align*} 
This 
verifies \eqref{COV_b1} for \textbf{(case 1)}
and \textbf{\textbf{(case 3)}}.

\medskip

Note that in \textbf{(case 2)} and \textbf{(case 4)}, 
the function $\Phi_{s,t}$  in \eqref{def_Phi}
does not belong to $L^1(\R^d)$.
As a result, we need to carry out another approach
 to settle this difficulty.

 \medskip

 \noi
 $\bul$ {\bf \textbf{(case 2)} and \textbf{(case 4)}.} Recall that $\wh F_R$ is a centered 
 random variable in $\DD^{1,2}$. Then, using 
 an integration-by-part formula from \cite[Theorem 2.9.1]{blue},
 we can  first write 
 
 \noi
 \begin{align} \notag
 \E[ \wh F_R \wh F_{R'} ]
 = \E\big[  \langle D\wh F_R, - DL^{-1} \wh F_{R'} \rangle_\H \big]
 \end{align}
 (see also \eqref{Stein_b2})
 and then apply the definition \eqref{iso_inner} and \eqref{ineq1a} 
 with Fubini's theorem and 
  Cauchy-Schwarz inequality to get
%  \footnote{This approach essentially 
% reduces the computations
% to those on first chaos only, while one can also bound the term 
% $ \E[ \wh F_R \wh F_{R'} ]$
% by first using the chaos expansion, which will lead to an infinite series 
% that includes the part \eqref{ibp4}. } 
 
 \noi
 \begin{align} \label{ibp2}
  \begin{aligned} 
& \big|   \E[ \wh F_R \wh F_{R'} ]  \big| \\
 & \leq 
\E \bigg[\int_{\R_+^2\times\R^{2d}}
   | D_{r, y}\wh F_R |  \times  | - D_{r', y'}L^{-1} \wh F_{R'}| \g_0(r-r') \g_1(y-y')
   drdr' dydy' \bigg] \\
   &\leq 
    \int_{\R_+^2\times\R^{2d}} 
     \big\| D_{r, y}\wh F_R \big\|_2  \times   \big\| D_{r', y'}\wh F_{R'} \big\|_2 \g_0(r-r') \g_1(y-y')
   drdr' dydy'. 
 \end{aligned} 
 \end{align}
 Using the basic property of Malliavin derivative operator
 and the bound in \eqref{DU},
 we have 
 
 \noi
\begin{align}  \label{ibp3} 
\begin{aligned}  
\big\| D_{r, y}\wh F_R \big\|_2 
&\leq \frac{1}{\s_R} \int_{|x| < R}  \| D_{r, y} u(t_0, x)  \|_2 dx \\
&\les   \frac{1}{\s_R} \int_{|x| < R} G_{t_0 -r}(x-y)  dx.
 \end{aligned} \end{align}
 
 Therefore, combining \eqref{ibp2} with \eqref{ibp3} leads us to 
 
 \noi
 \begin{align} \label{ibp4}
  \begin{aligned} 
\big|  \E[ \wh F_R \wh F_{R'} ] \big|
& \les 
\frac{1}{\s_R \s_{R'} }   \int_{\R_+^2\times\R^{2d}} 
  \int_{|x| <R}   \int_{ |x'| < R'}   G_{t_0 -r}(x-y)  G_{t_0 -r'}(x'-y')   \\
  & \quad \times \g_0(r-r') |y-y' |^{-\al} drdr' dydy' dx dx',
 \end{aligned} 
 \end{align}
 
 \noi
 where we recall from \eqref{sR_asymp} that $\s_R\sim R^{d - \tfrac\al2}$
 and from \eqref{QCLT1} that $\g_1(z) = |z|^{-\al}$ for 
  $\al\in(0, 2\wedge d)$.
  
In the following, we 
will use the Fourier analysis to get fine estimates of the above spatial integral
\eqref{ibp4}. Let us fix some notations: for an integrable function
$g:\R^d\to\R$, its Fourier transform $\wh g$ is given by 
$\wh{g}(\xi) = \int_{\R^d} e^{-ix\cdot \xi} g(x) dx$.
Recall the expressions of wave/heat kernels 
in \eqref{fSol} and 
we record below their Fourier transforms:

\noi
\begin{align} \label{fSolF}
\wh{G^H_t}(\xi)  = e^{-\frac{t}{2} |\xi|^2}
\quad
{\rm and}
\quad
\wh{G^W_t}(\xi) = \tfrac{\sin(t|\xi| )}{|\xi|}, \quad \text{for all } t > 0, \xi \in \R^d.
\end{align}

\noi
% with the convention that $G_t = 0$ for $t\leq 0$.
Then, 
using
%  the above Fourier transform \eqref{fSolF}
% with 
Plancherel's theorem, we get 
from \eqref{ibp4} with \eqref{sR_asymp} that 

\noi
\begin{align}
   \big| \E[ \wh{F}_R   \wh{F}_{R'} ] \big| \lesssim 
   &  (RR' )^{\frac{\alpha}{2} - d} \int_0^{t_0} \int_0^{t_0} drdr' \g_0(r-r') \int_{\R^d} d\xi 
    \notag  \\
   &\quad \times
\int_{|x| < R} dx  \int_{|x'|<R'} dx'   e^{-i\xi\cdot(x-x') }
   \wh{G}_{t_0-r}(\xi)    \wh{G}_{t_0-r'}(\xi) | \xi| ^{\al - d}    \notag    \\
   = & 
    (\tfrac{R'}{R} )^{\frac\alpha2}  \int_0^{t_0} \int_0^{t_0} drdr' \g_0(r-r')\int_{\R^d} d\xi     
    \notag  \\
   &\quad \times
\int_{|x| < 1}   dx  \int_{|x'|<1} dx'    e^{-i\xi\cdot \big(x- \tfrac{R'}{R}x'\big) }
   \wh{G}_{t_0-r}(\xi /R)    \wh{G}_{t_0-r'}(\xi/R) | \xi| ^{\al - d},   \label{Fa1}
\end{align}
 
\noi
where in the last equality,  we performed 
a change of variable $(x, x', \xi) \mapsto (Rx, R'x', \xi/R)$.

Next, we further bound \eqref{Fa1} in  \textbf{(case 2)} and \textbf{(case 4)} separately. 
For \textbf{(case 2)}, recalling \eqref{fSolF}, 
and using 

\noi
\begin{align} \notag %\label{int_euler}
  |\xi|^{- 2 \be} = \frac{1}{ \Gamma(\be)} \int_0^{\infty} d \theta 
  e^{-\theta |\xi|^2} \theta^{\be - 1}
\end{align}

\noi
with $\be =\tfrac{d-\al}{2} >0$,
and making a  change of variables
$(t_0 - r, t_0 - r') \mapsto (r, r')$, 
we can  get from \eqref{Fa1} 
that

\noi
\begin{align}
\label{Fa2}
\begin{aligned}
 \big| \E[ \wh{F}_R   \wh{F}_{R'} ] \big|   
  \lesssim 
  & (\tfrac{R}{R'})^{\frac\al2}  \int_0^{t_0} \int_0^{t_0} drdr' \g_0(r-r')
    \int_{|x|, |x'| < 1} dx dx'  \int_0^{\infty} d\theta  \theta^{\frac{d - \alpha  - 2}{2} }\\
  &\qquad \times \int_{\R^d} d\xi e^{- i (x - \tfrac{R'}{R}x') \cdot \xi -  (\theta + \frac{r + r'}{2R^2}) |\xi|^2} \\
  = & (2\pi)^{\frac{d}{2}} (\tfrac{R}{R'})^{\frac\al2}  \int_0^{t_0} \int_0^{t_0} drdr' \g_0(r-r')
    \int_{|x|, |x'| < 1} dx dx'  \\
  &\qquad \times     \int_0^{\infty} d\theta  \theta^{\frac{d - \alpha  - 2}{2} }   
   \big( \theta +  \tfrac{r + r'}{2R^2} \big)^{-\frac d2}
    \exp\Big( -\tfrac{|x - \frac{R'}{R} x'|^2}{4 ( \theta + \frac{r + r'}{2R^2}) }   \Big),
\end{aligned}
\end{align}

\noi
where in the last step, we use the Fourier transform of the heat kernel.

To deal with the  integration  in $\theta$ from \eqref{Fa2}, we decompose
 the  region $(0, \infty)$ into two segments $(0,\frac{r + r'}{2R^2})$ 
 and $(\frac{r + r'}{2R^2}, \infty)$. 
Since $r,r' \in (0,t)$, it is easy to deduce that

\noi
\begin{align}
\begin{aligned}
& \int_0^{\frac{r + r'}{2R^2}} d\theta  \theta^{\frac{d - \alpha  - 2}{2} }   
   \big( \theta +  \tfrac{r + r'}{2R^2} \big)^{-\frac d2}
    \exp\Big( -\tfrac{|x - \frac{R'}{R} x'|^2}{4 ( \theta + \frac{r + r'}{2R^2}) }   \Big) \\
\les
 &  \big(\tfrac{R^2}{r + r'}\big)^{\frac{d}{2}}   
 \exp\Big(-\tfrac{|x - \tfrac{R'}{R} x'   |^2}{4(r + r')/R^2}\Big) 
 \int_0^{\frac{r + r'}{2R^2}} d\theta \theta^{\frac{d - \alpha - 2}2}    
 \qquad\text{with $\tfrac{d - \alpha - 2}2 > -1$} \\
 \les
 & \big(\tfrac{R^2}{r + r'}\big)^{\frac{d}{2}}   
 \exp\Big(-\tfrac{|x - \tfrac{R'}{R} x'   |^2}{4(r + r')/R^2}\Big) 
\big(  \tfrac{r + r'}{2R^2} \big)^{\frac{d-\al}{2}}   \\
=
&  \big(\tfrac{R^2}{r + r'}\big)^{\frac{\al}{2}}   
 \exp\Big(-\tfrac{|x - \tfrac{R'}{R} x'   |^2}{4(r + r')/R^2}\Big) 
\les |x - \tfrac{R'}{R} x'   |^{-\al},
\end{aligned}
\label{Fa3a}
\end{align}

\noi
and

\noi
\begin{align}
\begin{aligned}
 \int_{\frac{r + r'}{2R^2}}^\infty d\theta  \theta^{\frac{d - \alpha  - 2}{2} }   
   \big( \theta +  \tfrac{r + r'}{2R^2} \big)^{-\frac d2}
   & \exp\Big( -\tfrac{|x - \frac{R'}{R} x'|^2}{4 ( \theta + \frac{r + r'}{2R^2}) }   \Big) 
 \leq     \int_{\frac{r + r'}{2R^2}}^\infty d\theta  \theta^{\frac{ - \alpha  - 2}{2} }   
    e^{ -\tfrac{|x - \frac{R'}{R} x'|^2}{8 \theta   }  } \\
\leq &   \int_{0}^\infty d\theta  \theta^{\frac{ - \alpha  - 2}{2} }   
    e^{ -\tfrac{|x - \frac{R'}{R} x'|^2}{8 \theta   }  }
\les    |x - \tfrac{R'}{R} x'   |^{-\al}.
\end{aligned}
\label{Fa3b}
\end{align}

\noi Therefore, we deduce from \eqref{Fa2}, \eqref{Fa3a}, and \eqref{Fa3b}
with the local integrability of $\g_0$
that

\noi
\begin{align}
\label{Fa3c}
\begin{aligned}
 \big| \E[ \wh{F}_R   \wh{F}_{R'} ] \big|   
& \les
 (\tfrac{R}{R'})^{\frac\al2}  \int_0^{t_0} \int_0^{t_0} drdr' \g_0(r-r')
    \int_{|x|, |x'| < 1} dx dx'   |x - \tfrac{R'}{R} x'   |^{-\al} \\
   &=
    (\tfrac{R}{R'})^{-\frac\al2}  
    \int_{|x| < 1} dx    \int_{|x'| < 1}   dx'   |\tfrac{R}{R'} x -  x'   |^{-\al}  
    \les   (\tfrac{R}{R'})^{-\frac\al2},  
\end{aligned}
\end{align}

\noi
where for the last step, we used the following elementary fact that
\begin{equation}\label{Fa3d}
\sup_{z\in\R^d}\int_{|x| < 1}  |z -   x  |^{-\be}  dx  < \infty, \quad\forall \be\in(0, d).
\end{equation}

\noi
 Thus, the proof of \eqref{COV_b1} in \textbf{(case 2)} is complete. 

\medskip

Now let us deal with \textbf{(case 4)}. For the wave kernel in $\R^d$
with $d\leq 2$, it enjoys the following 
property that with $\ind_R(x) \coloneqq  \ind_{\{ |x | < R\}}$,
\begin{align} \label{speed}
(\ind_R \ast G_r)(y) 
\coloneqq \int_{\R^d}  \ind_{\{ |x| < R\}} G_r(x-y) dx 
\leq  r \ind_{R+r}(y),
\end{align}

\noi
which follows from the definition \eqref{fSol} of the wave kernel;
see also \cite[Lemma 2.1]{BNZ}.
Note that the Fourier transform of  $\ind_R$ is a 
real-valued and rotationally symmetric function (see, e.g., \cite[Lemma 2.1]{NZ20a}).
Then, using Plancherel's theorem and some Fourier calculations, 
we derive from \eqref{ibp4} with \eqref{sR_asymp} and
a change of variables $(t_0 - r, t_0- r')\to (r, r')$
that 

\noi
 \begin{align*}  
&\big|  \E[ \wh F_R \wh F_{R'} ] \big|
 \les 
 (RR')^{\frac{\al}{2} -d }  \int_0^{t_0} \int_0^{t_0} dr dr' \g_0(r-r')  \int_{\R^d} d\xi  |\xi |^{\al-d} 
   \wh{G}_r(\xi)  \wh{G}_{r'}(\xi)   \\
 & \qquad\qquad\qquad \quad \times \bigg(   \int_{ |x| <R }    dx  \int_{ |x'| < R'} dx' e^{-i(x-x') \cdot \xi} \bigg)  \\
 &= (RR')^{\frac{\al}{2} -d }  \int_0^{t_0} \int_0^{t_0} dr dr' \g_0(r-r')  
 \int_{\R^d} d\xi  |\xi |^{\al-d} 
 \wh{\ind_R}(\xi)  \wh{\ind_{R'}}(\xi)   \wh{G}_r(\xi)  \wh{G}_{r'}(\xi)   \\
 &= C_{\al, d} (RR')^{\frac{\al}{2} -d }  \int_0^{t_0} \int_0^{t_0} dr dr' \g_0(r-r')  
 \int_{\R^{2d}}  (\ind_R\ast G_r)(y)  (\ind_{R'}\ast G_{r'})(y') |y-y'|^{-\al}dydy' , 
 \end{align*}
 
 \noi
 where the constant $C_{\al, d}$ comes from inverting the 
 Fourier transform of the spectral measure $|\xi |^{\al-d}d\xi$.
 Now applying the inequality \eqref{speed} with $r, r' \in (0, t_0)$
 and utilizing the local integrability of $\g_0$,
 we get 
 
 \noi
 \begin{align*}  
\big|  \E[ \wh F_R \wh F_{R'} ] \big|
 &\les 
  t_0^2  (RR')^{\frac{\al}{2} -d }  \int_0^{t_0} \int_0^{t_0} dr dr' \g_0(r-r')  
 \int_{\R^{2d}}  \ind_{R+t_0}(y)   \ind_{R' +t_0}(y') |y-y'|^{-\al}dydy' \\
 &\les 
 (RR')^{\frac{\al}{2} -d }  \int_{\R^{2d}}  \ind_{R+t_0}(y)   \ind_{R' +t_0}(y') |y-y'|^{-\al}dydy' \\
 &\les  (RR')^{\frac{\al}{2} -d } (R+t_0)^d  (R'+t_0)^{d-\al}
 \les (\tfrac{R}{R'})^{\frac{\al}{2}},
 \end{align*}

\noi
where we obtained the last second step with the same argument as in 
\eqref{Fa3c}-\eqref{Fa3d}.
Hence, the proof of \eqref{COV_b1} in \textbf{(case 4)} is finished. 
\qed

\subsection{Proof of (\ref{ob2})} \label{SEC23}

 To prove \eqref{ob2}, we first apply 
 Proposition \ref{lmm_d-dl} with 
 the   bounds in \eqref{DU}
 and \eqref{ibp3}:
 with $B_R : = \{ x\in\R^d: |x| < R\}$,
 we need to bound 
 
\noi
\begin{align*}
  \mathcal{A} \big(\widehat{F}_R, \widehat{F}_{R'} \big) 
  \les   
  & \s_R^{-2} \s_{R'}^{-2} \int_0^{t_0} dr \int_0^{r} d \theta 
  \int_0^{t_0} dr' \int_0^{r'} d \theta'  \int_{[0,t]^2} ds ds' 
  \int_{\R^{6d}} dzdz' dydy' d\w d\w' \\
&\qquad \times 
\g_0(s-s') \g_0(r-r') \g_0(\theta - \theta')  \g(z-z') \g(y-y') \g(\w - \w')  \\
&\qquad \times \int_{B_R^2} d x_1 d x_2 
G_{t - r} (x_1 - z) G_{r - \theta} (z - \w) 
G_{t - r'} (x_2 - z') G_{r' - \theta'} (z' - \w')   \\
&\qquad \times \int_{B_{R'}^2} d x_3 d x_4 G_{t - r'}(x_3 - y) G_{t - s'} (x_4 - y').
\end{align*}

In \textbf{(case 1)}, we simply enlarge the region $B_R$ to $B_{R'}$. 
Then, applying the estimates for $\mathcal{A}^*$ 
appearing in \cite[Section 3.1.1]{NXZ22} and 
recalling from \eqref{sR_asymp} that  $\s_R=\sigma_R(t_0) \sim R^{\frac{d}{2}}$,
one can easily conclude that
\begin{align*}
  \mathcal{A} \big(\widehat{F}_R, \widehat{F}_{R'} \big) 
  \les 
   R^{-d}.
  \end{align*}

For \textbf{(case 2)}, we follow the idea employed in \cite[Section 3.1.2]{NXZ22}.
That is, for i.i.d. standard normal random variables $Z_1,\dots, Z_6$,
we have

\noi
\begin{align*}
  \mathcal{K} \coloneqq 
  & \int_{B_1^4} dx_1 \cdots d x_4
   \E\bigg[  \Big|  \frac{\sqrt{t-r}}{R} Z_1 - \frac{\sqrt{r-\theta}}{R}  Z_2 
   - \frac{\sqrt{t-s}}{R}     Z_3    +  \frac{\sqrt{s-\theta'}}{R}Z_4 + x_1-x_2   \Big|^{-\al} \\
  &\quad \quad \times  \Big| \frac{\sqrt{t-s}}{R}    Z_3
     -   \frac{\sqrt{t-s'}}{R'} Z_6 +x_2-x_4 \Big|^{-\alpha}  
  \times  \Big|  \frac{\sqrt{t-r}}{R} Z_1 - \frac{\sqrt{t-r'}}{R'}Z_5 + x_1 - x_3  \Big|^{-\al} \bigg]\\
  \les & 1,
\end{align*}

\noi
and thus,  the expression
\begin{align*}
  S_R = & S_R (t,r,r',s,s',\theta,\theta')\\
   \coloneqq & \int_{B_R^2} d x_1 d x_2 \int_{B_{R'}^2} d x_3 d x_4 \int_{\R^{6d}} dzdz' dydy' d\w d\w' 
 \g(z-z') \g(y-y') \g(\w - \w')    \\
&\quad  \times  G_{t - r} (x_1 - z)  G_{r - \theta} (z - \w) G_{t - r'} (x_2 - z') G_{r' - \theta'} (z' - \w') G_{t - r'}(x_3 - y) G_{t - s'} (x_4 - y') \\
= & R^{2d - 2\alpha} (R')^{2d - \alpha} \mathcal{K} 
\les  R^{2d - 2\alpha} (R')^{2d - \alpha}.
\end{align*}

\noi
Then, with $\sigma_R \sim R^{d - \frac\alpha2}$, we get  

\noi
\begin{align*}
  \mathcal{A} \big(\widehat{F}_R, \widehat{F}_{R'} \big) 
  \les  \sigma_R^{-2} \sigma_{R'}^{-2} \int_0^{t_0} dr \int_0^{r} d \theta
   \int_0^{t_0} dr' \int_0^{r'} d \theta'  \int_{[0,t]^2} ds ds' S_R \lesssim R^{-\alpha}.
\end{align*}

Turning to the cases for \eqref{HAM}, we first consider 
  \textbf{(case 3)}.  Just like   \textbf{(case 1)}, 
  with extending the integrating region in $x_1, x_2$ to $B_{R'}$, 
  applying the estimates for $\mathcal{A}_R$ in \cite[Page 809]{BNQSZ}, 
  we have
$  \mathcal{A} \big(\widehat{F}_R, \widehat{F}_{R'} \big) \lesssim R^{-d}$;
and for   \textbf{(case 4)},  following a similar argument as in \cite[Section 4.2.2]{BNQSZ}, one can deduce that
$
  \mathcal{A} \big(\widehat{F}_R, \widehat{F}_{R'} \big) \lesssim R^{-\alpha}.
$
The estimates for $\mathcal{A} (\widehat{F}_{R'}, \widehat{F}_{R} )$ 
are also very similar and thus omitted here. Therefore, we have

\noi
\begin{align*}
\mathcal{A} \big(\widehat{F}_{R}, \widehat{F}_{R'} \big) 
+ \mathcal{A} \big(\widehat{F}_{R'}, \widehat{F}_{R} \big) \lesssim \begin{dcases}
R^{-d}, & \text{\textbf{(case 1)} and \textbf{(case 3)} }\\
(R')^{-\alpha} < R^{-\alpha}, & \text{\textbf{(case 2)} and    \textbf{(case 4)}      }.
\end{dcases}
\end{align*}

\noi
The proof of \eqref{ob2} is then  complete by invoking  Proposition \ref{lmm_d-dl}. 
\qed

\quad\\
\noi
{\bf $\bul$ Acknowledgement.} The authors are grateful to the referee and the editor 
for the suggestions and comments.

\quad\\
\noi
{\bf Data Availability Declaration:} No data is used.  
 
\quad\\
\noi
{\bf Author Contribution Declaration}: The authors have made equal contribution in preparing this work.

\quad\\
\noi
{\bf Competing Interest Declaration}: The authors declare that
 the authors have no competing interests as defined by Springer, 
 or other interests that might be perceived to influence the results and/or discussion reported in this paper.

\end{document}